\def\JPicScale{1.0}\fi
\theoremstyle{plain}
   \newtheorem{theorem}{Theorem}[section]
   \newtheorem{proposition}[theorem]{Proposition}     
   \newtheorem{lemma}[theorem]{Lemma}
   \newtheorem{corollary}[theorem]{Corollary}
\theoremstyle{definition}
   \newtheorem*{ack}{Acknowledgements}
   \newtheorem{example}[theorem]{Example}
   \newtheorem{remark}[theorem]{Remark}
\newcommand{\quotient}{/\hspace{-1.2mm}/}
\newcommand{\MM}{\overline{\mathcal{M}}}
\newcommand{\M}{\overline{M}}
\newcommand{\SL}{\operatorname{SL}}
\newcommand{\Spec}{\operatorname{Spec}}
\newcommand{\Pic}{\operatorname{Pic}}
\newcommand{\Hom}{\operatorname{Hom}}
\newcommand{\Image}{\operatorname{Im}}
\newcommand{\Cox}{\operatorname{Cox}}
\newcommand{\Bl}{\operatorname{Bl}}
\newcommand{\Ga}{\mathbb{G}_a}
\newcommand{\Gm}{\mathbb{G}_m}
\newcommand{\PP}{\mathbb{P}}
\numberwithin{theorem}{section}
\begin{document}

\title{Projective linear configurations via non-reductive actions}
\author{Brent Doran and Noah Giansiracusa}
\maketitle


\begin{abstract}
We study the iterated blow-up $X$ of projective space along an arbitrary collection of linear subspaces.  By replacing the universal torsor with an $\mathbb{A}^1$-homotopy equivalent model, built from $\mathbb{A}^1$-fiber bundles not just algebraic line bundles, we construct  an ``algebraic uniformization":  $X$ is a quotient of affine space by a solvable group action.  This provides a clean dictionary, using a single coordinate system, between the algebra and geometry of hypersurfaces: effective divisors are characterized via toric and invariant-theoretic techniques.  In particular, the Cox ring is an invariant subring of a $\Pic(X)$-graded polynomial ring and it is an intersection of two explicit finitely generated rings.  When all linear subspaces are points, this recovers  a theorem of Mukai while also giving it a geometric proof and topological intuition.  Consequently, it is algorithmic to describe $\Cox(X)$ up to any degree, and when $\Cox(X)$ is finitely generated it is algorithmic to verify finite generation and compute an explicit presentation.  We consider in detail the special case of $\M_{0,n}$.  Here the algebraic uniformization is defined over $\Spec\mathbb{Z}$.  It admits a natural modular interpretation, and it yields a precise sense in which $\M_{0,n}$ is ``one non-linearizable $\mathbb{G}_a$ away" from being a toric variety.  The Hu-Keel question becomes a special case of Hilbert's 14th problem for $\Ga$.
\end{abstract}


\section{Introduction}


\subsection{The main theorem and its context}

Much of late nineteenth-century geometry was predicated on the presence of a single global coordinate system endowed with a group action.  Indeed, the strong links between representation theory, moduli of projective hypersurfaces,  and invariant theory famously led to the development, from Hilbert's celebrated papers on algebra onwards, of the modern algebro-geometric dictionary.  Moreover, experience has shown that spaces admitting such a description often have beautiful and geometrically meaningful, if intricate, internal structure.  For instance, partial flag varieties can be described via Stiefel coordinates, realizing them as geometric invariant theory (GIT) quotients of affine space.  Many of the striking links between their algebraic geometry (and that of their subvarieties, intersection theory, periods, etc.) and subtle questions in representation theory \cite{CG97,BS00,Res10,LY13,CDGW13} can be seen more concretely in that light.  Another modern class of examples is toric varieties, whose homogeneous coordinate ring and description as a GIT quotient of affine space is powerful and widely used in algebraic geometry and mathematical physics \cite{GKZ94, Cox95, CK99}.  

Within projective geometry, a core question is to understand the geometry of subvarieties of a given variety $X$, e.g., the 27 lines on a cubic surface.  There are two ways in which the algebraic uniformization perspective naturally applies, reflecting the decomposition of moduli of subvarieties into continuous families and discrete ``topological" data.  The first way is to restrict to a given divisor class of hypersurfaces (or, say, rational equivalence class of complete intersections) in $X = \mathbb{A}^n \quotient G$, and consider the space of divisors in the class; two divisors are ``polynomial equivalent" if related by automorphisms of $X$, i.e., by $G$-equivariant automorphisms of $\mathbb{A}^n$.  This perspective, especially the study of simplified versions of the moduli problem, has proved especially fruitful in mirror symmetry with the description of moduli of Calabi-Yau varieties and their periods \cite{GKZ94, CK99, DK07, LY13}.  

The second way the perspective applies not only encodes but also supplements the ``topological" (or, better, ``motivic") constraint organizing subvarieties by their rational equivalence classes.  The difficulty is that identifying which rational equivalence classes admit a representative subvariety, i.e., identifying the effective monoid or cone of a given dimension, is somewhat orthogonal to even our best techniques inspired by topology.  Indeed, with few exceptions (e.g., spherical varieties \cite{FMSS95}) these problems are considered to be very hard.  However, the group action on affine space transfers such effectivity questions to existence of invariants and representation theory questions, which in turn can often be approached by GIT or moment map techniques.  

In this paper we develop a class of examples from projective geometry where, like the toric and flag varieties, there is a ``uniformization" by affine space and we have a perfect algebra-geometry dictionary in the most classical sense.   It will emerge that our construction reflects the surprisingly subtle geometry of linear projections in projective space.  Alternatively, from a linear algebraic perspective, and as will be evident from the explicit description of the group actions, we are focusing on degenerations in families of shear transformations.  

\begin{theorem}\label{intro:quotient}
Let $X$ be an iterated blow-up of $\PP^r$ along a collection of $s$ linear subspaces.  There exists $t\ge 0$ such that $X$ is isomorphic to a (non-reductive) GIT quotient of $\mathbb{A}^{r+s+t+1}$ by an action of $\Ga^{t}\rtimes\Gm^{s+1}$.  This isomorphism is defined over the same ring as the linear subspaces.
\end{theorem}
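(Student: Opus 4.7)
The plan is to construct the presentation by induction on $s$, the number of linear subspaces in the iterated blow-up, building the quotient structure one blow-up at a time. The guiding idea, signaled in the abstract, is to replace the usual universal torsor of $X$ (a $\Gm^{\rho}$-bundle, where $\rho = \rank \Pic(X) = s+1$) by an $\mathbb{A}^1$-homotopy equivalent model built from $\mathbb{A}^1$-fiber bundles as well as line bundles; $\mathbb{A}^1$-fiber bundles are Zariski-locally trivial $\Ga$-torsors, and trivializing them globally exhibits the enlarged ``torsor'' as an open subset of affine space, with solvable structure group $\Ga^t \rtimes \Gm^{s+1}$.

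For the base case $s=0$, take $\PP^r = \mathbb{A}^{r+1} \quotient \Gm$ with $t=0$. For the inductive step, assume an intermediate blow-up $X'$ (along some $s'$ of the subspaces, in the chosen order) is already presented as $U/G$ with $U \subset \mathbb{A}^N$ ($N = r+s'+t'+1$) the semistable locus and $G = \Ga^{t'} \rtimes \Gm^{s'+1}$. Let $L \subset X'$ be the next linear subspace to blow up, lifted to the $G$-invariant closed subvariety $\tilde L \subset U$ cut out by the pullback of the defining equations of $L$; since $L$ is linear and the $\Gm^{s'+1}$-grading refines $\Pic(X')$, $\tilde L$ is cut out by finitely many $\Gm^{s'+1}$-semi-invariant linear forms in the coordinates on $\mathbb{A}^N$. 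Adjoin one coordinate $z$ to obtain $\mathbb{A}^{N+1}$ and enlarge $G$ to $G' = \Ga^{t'+\delta} \rtimes \Gm^{s'+2}$: the new $\Gm$-factor scales $z$ and compensates on the appropriate coordinates (giving the extra Picard class for the new exceptional divisor), while the $\delta \geq 0$ new $\Ga$-factors act as shears that bring the defining equations of $\tilde L$ to coordinate-subspace form. When $\tilde L$ is already a coordinate subspace the step is purely toric ($\delta = 0$); otherwise $\delta > 0$ shears are required. Finally, one defines the toric-style semistable locus $U' \subset \mathbb{A}^{N+1}$ and verifies via the universal property of blow-ups that $U'/G' \cong \Bl_L X'$.

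The principal obstacle is guaranteeing that the inductively introduced $\Ga$-factors fit into the semi-direct product structure in the claimed form: each new $\Ga$-generator must be a $\Gm^{s'+2}$-eigenvector of specified weight (dictated by the weight of the linear form it straightens), and all commutators among the old and new $\Ga$-factors must vanish so that the unipotent radical remains an abelian $\Ga^{t'+\delta}$ rather than a larger unipotent group. This weight-theoretic bookkeeping may require re-indexing the old $\Ga$'s at each stage, and carrying it out uniformly for an arbitrary iterated linear configuration is the main technical challenge. Once the group-theoretic compatibility is in hand, the statement about the ring of definition is immediate: every equation, weight, and shear is built from polynomial data already determined over the ring of definition of the original configuration, so no base-ring extension is needed at any stage.
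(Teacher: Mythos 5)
Your guiding idea---enlarge the ambient space so the centers become coordinate subspaces and then absorb the extra directions into additive factors of the group---is the right one, but the inductive implementation has a concrete flaw and defers exactly the point where the content lies. First, the dimension count in your inductive step fails: you adjoin a single coordinate $z$ but $\delta$ new $\Ga$-factors plus one new $\Gm$, so the expected quotient dimension drops from $r$ to $r-\delta$; every additive factor must be matched by an extra affine coordinate (this is why the theorem has $\mathbb{A}^{r+s+t+1}$ with exactly $t$ additive factors: $(r+s+t+1)-t-(s+1)=r$). Moreover, a shear is not a mechanism for ``bringing $\tilde L$ to coordinate form'' inside a fixed $\mathbb{A}^{N+1}$---that would be a change of variables, not a subgroup you quotient by; what is actually needed is to add new directions whose translations are then killed by the new $\Ga$'s. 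Second, you explicitly flag as ``the main technical challenge'' the compatibility of the new additive factors with the previously constructed, already non-linear ones (abelianness of the unipotent part, the semidirect product structure, the eigenvector conditions); that is not bookkeeping to be deferred---it is the heart of the matter, and your proposal gives no argument for it. The paper avoids the issue entirely by not inducting: it chooses bases $p^i_j$ of the $L_i$ (so $n=\sum(\dim L_i+1)$, $t=n-r-1$), forms the matrix $A$ whose columns are these points, and uses the single linear projection $\varphi_A:\PP^{r+t}\dashrightarrow\PP^r$ carrying coordinate subspaces $L_i'$ isomorphically onto the $L_i$; the additive group is then simply $G=\ker A\cong\Ga^t$ acting by simultaneous translations (abelian for free), lifted to a monomial-twisted action on the polynomial Cox ring of the toric blow-up $X'$.

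Third, your final step ``define the toric-style semistable locus and verify via the universal property of blow-ups that $U'/G'\cong\Bl_L X'$'' does not engage with what a non-reductive GIT quotient is or why it exists here. The paper's proof must show that the lifted $\Ga^t$-action is a torsor over the universal torsor of $X$ away from codimension two (done with an explicit separating set of invariants obtained by clearing denominators of the linear invariants, plus the $\SL_2$-sweep argument), deduce $\Cox(X)=\Cox(X')^{\Ga^t}$, and only then realize $X$ as a quotient by identifying the graded ring of semi-invariants for a suitable character of $\Ga^t\rtimes T_{NS}$ with the section ring of a projectively normal ample line bundle on $X$ (as in Corollary \ref{cor:affinequot}). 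This last step cannot be replaced by a universal-property argument on semistable loci: for these non-reductive actions the quotient map need not even be surjective (the image is only constructible; cf.\ Corollary \ref{cor:image} for $\M_{0,n}$ with $n\ge 7$), so the identification of the quotient variety has to go through the invariant-theoretic description of sections rather than through a geometric quotient of an open set.
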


Varying the choice of character yields many birational models of these spaces.  One motivation for this quotient construction is to lay the foundations for a future exploration of wall-crossings in this more flexible setting.  Theorem \ref{intro:quotient} builds directly on work of the first author \cite{AD07, AD08,AD09,DK07} but the arguments herein are self-contained and mostly quite elementary.

A minimal value of the parameter $t$ measures how far $X$ is from being toric, with $t=0$ being the case that $X$ is in fact a toric variety.  For instance, the blow-up of $s$ points in general linear position in the plane $\PP^2$ corresponds to $t=\max\{0,s-3\}$.  We may thus think of these spaces as being ``almost toric" in a quantifiable sense.

Up to a codimension two modification, the affine space in Theorem \ref{intro:quotient} is the affine envelope for a particular presentation of the geometric $\mathbb{A}^1$-universal covering space of $X$.  One may view this affine space as a master space for the almost toric variety $X$, in the sense of Thaddeus and variation of GIT \cite{Tha96}, but now for a solvable group action, or a torus action ``up to algebraic homotopy", much as affine space actually is a master space for a toric variety.  From this point of view, the existence of infinitely many wall-crossings in general is to be expected.

The class of varieties covered by this theorem includes all del Pezzo surfaces, as well as their higher-dimensional and non-Fano analogues, by taking the linear subspaces to be points.  An interesting setting where blow-ups of positive-dimensional subspaces naturally arise is the De Concini and Procesi theory of wonderful compactifications of subspace arrangements \cite{DP95}.  A special case is the braid arrangement, which corresponds to the moduli space $\M_{0,n}$ of $n$-pointed stable rational curves via Kapranov's construction of this space as an iterated blow-up of $\PP^{n-3}$ \cite{Kap93a,Kap93b}.  This forms central example in this paper.  We use Theorem \ref{intro:quotient} in \cite{DGJ14} to study the Cox ring, effective cone, and monoid of effective divisor classes, over fields of arbitrary characteristic, of $\M_{0,n}$.

Our construction demonstrates that certain phenomena commonly viewed as peripheral, exotic or even pathological are actually at the core of the intricate global geometry of the blow-ups in Theorem \ref{intro:quotient}, and in particular of $\M_{0,n}$.  The first crucial phenomenon is that, unlike in topology, there are many Zariski locally trivial affine space bundles that are not algebraic vector bundles.  For $\M_{0,n}$ there is a distinguished one of rank one on an open subset with codimension two complement; keeping track of this is geometrically important and underlies almost all of the results in this paper and  in \cite{DGJ14}.  A second phenomenon is that unipotent group representations come in positive-dimensional families, unlike the reductive case which is constrained by a discrete classification, and arise naturally in (often infinite-dimensional) automorphism groups of affine varieties; this is reflected in the positive-dimensional moduli of blow-ups of sufficiently many linear subspaces.  A third phenomenon is that almost all automorphisms of affine space are non-linearizable.  Our results strongly suggest that restricting to linear representations is not a geometrically natural thing to do even for the study of low-dimensional smooth projective geometry.  It is interesting to speculate how links between representation theory and algebraic geometry might naturally extend to the non-linearizable case.  


\subsection{A homotopic replacement of the universal torus torsor}\label{IntSec:UT}

From one point of view, we use an extension of the idea of universal torsors due to arithmetic geometers Colliot-Th\'{e}l\`{e}ne and Sansuc and later developed by many others \cite{CT76,CT79,CT87}.  The difference is that they only consider torus torsors, or equivalently over an algebraically closed field, the data of algebraic line bundles on a given variety up to isomorphism.  Yet there are Zariski locally trivial $\mathbb{A}^1$-bundles that are not isomorphic to algebraic line bundles.  Taking into account such $\mathbb{A}^1$-bundles, one can try to replace the universal torsor with a solvable group torsor (or a small modification thereof) whose total space is a much better behaved quasi-affine variety. 

In the context of Theorem \ref{intro:quotient}, this goes as follows.  Label the subspaces defining $X$ as $L_1,\ldots,L_s\subset \PP^r$.  For $t$ large enough, there exists \emph{coordinate} linear subspaces $L_1',\ldots,L_s'\subset\PP^{r+t}$, with $\dim L_i' = \dim L_i$, and a linear projection $\varphi : \PP^{r+t} \dashrightarrow \PP^r$ satisfying $\varphi(L_i') = L_i$.  If $X'$ denotes the corresponding iterated blow-up along the $L_i'$, then $X'$ is a toric variety and the induced rational map $\varphi' : X' \dashrightarrow X$ is a Zariski-locally trivial $\mathbb{A}^t$-bundle over $\Image\varphi'$.  Away from a codimension $\ge 2$ locus, this lifts to a $\Ga^t$-torsor on the universal torsor of $X$; the total space is isomorphic in codimension one to the universal torsor of $X'$ and the fibers are the orbits of a $\Ga^t$-action, which naturally extends to the affine envelope $\mathbb{A}^{r+s+t+1} := \Spec\Cox(X')$.  

In short, by studying $\mathbb{A}^1$-bundles, not just algebraic line bundles, we extend Cox's quotient construction of toric varieties to a large class of varieties traditionally considered very far from toric---and in doing so we allow additive, not just multiplicative, groups and hence leave the realm of Mumford's GIT \cite{GIT} and rely instead on non-reductive GIT \cite{DK07}.


\subsection{Cox rings and effective divisors}

A Cox ring is the ring of functions on the quasi-affine universal torsor of a variety.  These have been difficult to study apart from a few special cases for at least three reasons: (i) this ring need not be finitely generated, (ii) even if it is finitely generated, so the base variety is a Mori dream space \cite{HK00}, any effort to present the ring must overcome the ``false positives", i.e., many separating sub-algebras of functions tantalizingly exist yet give an incorrect affine envelope for the universal torsor upon taking $\Spec$, and (iii) in the rare case a complete presentation of a finitely generated Cox ring can be found, taking $\Spec$ usually gives a high codimension, far from complete intersection, and highly singular affine variety.  Underlying Theorem \ref{intro:quotient} is a new approach to the Cox rings of these blow-ups that for many purposes bypasses all three of these issues.  

We in essence translate the geometry of $X$ into equivariant geometry of the universal torsor over the toric variety $X'$ discussed in \S\ref{IntSec:UT}.  The necessity of a small modification in some cases means this approach is most direct for studying divisors.  For instance, the effective divisors on $X$ can be described by a combination of toric (in effect, topological) and invariant-theoretic techniques:
\begin{theorem}\label{intro:Cox}
With notation as above, $\Cox(X) = \Cox(X')^{\Ga^t}$.
\end{theorem}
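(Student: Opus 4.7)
\emph{Proof plan.} The plan is to realize both $\Cox(X)$ and $\Cox(X')$ as global regular functions on their respective universal torsors $T_X$ and $T_{X'}$, and then convert the $\Ga^t$-torsor relation between them sketched in Section \ref{IntSec:UT} into the ring-theoretic statement. Since $X'$ is toric, Cox's construction identifies $\Spec\Cox(X')$ with the affine envelope $\mathbb{A}^{r+s+t+1}$ of $T_{X'}$, and $T_{X'}$ is obtained from it by removing a closed subset of codimension $\geq 2$. The variety $X$, being an iterated blow-up of $\PP^r$ along smooth linear subvarieties taken in a suitable order, is itself smooth, and its universal torsor $T_X$ is smooth and quasi-affine with $\Cox(X) = H^0(T_X, \mathcal{O})$.

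The central input from Section \ref{IntSec:UT} is that the rational lift of $\varphi' : X' \dashrightarrow X$ to the universal torsors restricts to a genuine $\Ga^t$-torsor $U_{X'} \to U_X$ between open subsets whose complements have codimension $\geq 2$ in $T_{X'}$ and $T_X$ respectively. Granting this, the theorem follows from the chain of identifications
\[
\Cox(X) = H^0(T_X, \mathcal{O}) = H^0(U_X, \mathcal{O}) = H^0(U_{X'}, \mathcal{O})^{\Ga^t} = H^0(T_{X'}, \mathcal{O})^{\Ga^t} = \Cox(X')^{\Ga^t}.
\]
The first and fourth equalities are Hartogs extensions on normal schemes with codimension-$\geq 2$ complements; the middle equality is the standard fact that regular functions on the base of a $\Ga^t$-torsor are precisely the $\Ga^t$-invariant regular functions on the total space (equivalently, $\Ga^t$ has trivial character lattice, so $H^0(\mathrm{total})^{\Ga^t} = H^0(\mathrm{base})$); and the last equality uses that the $\Ga^t$-action extends from $U_{X'}$ to the affine envelope $\mathbb{A}^{r+s+t+1} = \Spec\Cox(X')$, so the Hartogs extension commutes with taking invariants.

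To confirm grading compatibility, note that $\Gm^{s+1}$ normalizes $\Ga^t$ inside the semidirect product $\Ga^t \rtimes \Gm^{s+1}$ of Theorem \ref{intro:quotient}. Therefore the $\Pic(X')$-grading on $\Cox(X')$, induced by the $\Gm^{s+1}$-action, restricts to a grading on the $\Ga^t$-invariant subring, and under the above identifications this restricted grading matches the $\Pic(X)$-grading on $\Cox(X)$ coming from the descended $\Gm^{s+1}$-action on $X$.

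The main obstacle is securing the $\Ga^t$-torsor structure on codimension-one opens: one must track how the indeterminacy locus of the linear projection $\varphi : \PP^{r+t} \dashrightarrow \PP^r$ propagates through the iterated blow-up of $X'$ along the coordinate subspaces $L_i'$ and then lifts to the universal torsor, and verify that the locus on $T_{X'}$ where the $\Ga^t$-action fails to be free has codimension $\geq 2$ (so that Hartogs can be applied and the quotient is legitimately a torsor off a small set). This geometric analysis is the substantive content carried by Section \ref{IntSec:UT}; once it is in hand, the identification of Cox rings as $\Ga^t$-invariants is a formal consequence of the chain above.
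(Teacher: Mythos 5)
Your formal skeleton (codimension-two insensitivity of Cox rings, invariant functions on a $\Ga^t$-torsor being exactly the functions on the base, compatibility of the $\Pic$-gradings via the normalization of $\Ga^t$ by the torus) agrees with the paper's, but there is a genuine gap: the ``central input'' you grant yourself --- that the lift of $\varphi'$ restricts to a $\Ga^t$-torsor $U_{X'} \to U_X$ with $U_X$ open of codimension $\geq 2$ complement in the universal torsor of $X$ --- is precisely the statement whose proof constitutes the substance of the paper's argument (\S3.4 and \S4); \S\ref{IntSec:UT} is only an introductory sketch and carries no proof you can defer to. You also mislocate the remaining difficulty. Checking that the fixed locus has codimension $\geq 2$ is the easy part: for a unipotent group, freeness off a small set does not by itself yield a torsor, since a free $\Ga^t$-action on a quasi-affine variety need not admit a geometric quotient at all (quotients can be non-separated or fail to exist as schemes). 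The paper therefore has to exhibit explicit separating invariants (the boundary-divisor binomials of Proposition \ref{prop:boundary}, resp.\ their cleared-denominator analogues in \S4) to get a geometric quotient, and then upgrade it to a torsor via the $\SL_2$-sweep, \cite[Proposition 0.8]{GIT}, and \'{e}tale descent.

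Second, even once the torsor structure is secured, your chain silently assumes that the base of the torsor is (an open subset of) the universal torsor $T_X$. Nothing in the construction hands you a map to $T_X$: what one obtains is a quotient of an open subset of $\Spec\Cox(X')$, and it must be identified. The paper does this by observing that the quotient is a $T_{NS}$-torsor over $X$ (the $\Ga^t$-action is normalized by $T_{NS}$) whose total space has trivial Picard group, since it is an open subset of affine space with small complement and $\Pic$ is an $\mathbb{A}^1$-homotopy invariant; only this pins it down, in codimension one, as the universal torsor rather than some other $T_{NS}$-torsor. Without these two steps your displayed chain of equalities is a restatement of the theorem, not a proof of it.
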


Indeed, the Cox ring is graded by the Picard group and the effective divisor classes correspond to the non-zero homogeneous pieces; the crucial point here is then that the inclusion \[\Cox(X) = \Cox(X')^{\Ga} \subseteq \Cox(X')\] is $\Pic(X)=\Pic(X')$ equivariant---so a divisor class on the almost toric variety $X$ is effective if and only if the same class on the toric variety $X'$ admits an invariant section.

An immediate consequence of Theorem \ref{intro:Cox} is that, regardless of finite generation, it is algorithmic to compute generators of $\Cox(X)$ up to a given degree and their relations, and if $\Cox(X)$ is finitely generated then it is algorithmic to verify this \cite{DK08}.

Mukai had previously identified the Cox ring of $X$ as a $\Ga^t$-invariant subring of a polynomial ring in the case that all linear subspaces $L_i$ are points \cite{Muk01}.  His proof is purely algebraic, so our geometric proof is both novel and a substantial generalization of Mukai's.  For an interpretation of this result in terms of wonderful models, see \cite{BM14}.


\subsection{A new view of the moduli space $\M_{0,n}$}

Although $\M_{0,n}$ has recently featured prominently in many areas involving topological (operadic), motivic and period-theoretic (mixed Tate motives and multiple zeta values), and even physics-inspired questions \cite{EHKR10,GM03,Del12,Bro12,Zag12}, its classical global geometry has remained largely a mystery.  This is true despite much study and well-known applications (e.g., its cone of curves determines the ample cone of $\M_{g}$, so describing it would answer an old question of Mumford \cite{Mum77,GKM02}).  Not even the effective divisors for $n \geq 7$ are known, and until now there had been no geometrically meaningful program available to study them, despite surprising evidence showing how far off is our naive understanding of the situation \cite{Ver02,CT13a,Pix11}.

The application of Theorem \ref{intro:quotient} to $\M_{0,n}$ may be succinctly summarized as saying that $\M_{0,n}$ is one non-linearizable $\mathbb{G}_a$-action away from being a toric variety.  This precisely encodes the rough sense practitioners in the area have had that $\M_{0,n}$ is almost toric even though there is no big torus in the automorphism group and no toric techniques apply.  Indeed, since Kapranov blows up the linear subspaces spanned by $n-1$ general points in $\PP^{n-3}$, we can set $t=1$ in the notation of Theorem \ref{intro:quotient} and blow up the dimension $\le n-5$ subspaces spanned by all $n-1$ coordinate points in $\PP^{n-2}$.  The toric variety $X'$ this produces is a Hassett space of weighted pointed curves, closely related to the Losev-Manin space of pointed chains, and the rational map $X' \dashrightarrow \M_{0,n}$ admits a modular interpretation in this context; cf. \S\ref{sec:modular}.   Note that the linear subspaces being blown up here are defined over $\Spec\mathbb{Z}$, so our construction as a quotient of affine space is as well.

Recall that the well-known Mori dream space question of Hu and Keel asks whether $\M_{0,n}$ has finitely generated Cox ring \cite{HK00}.  This has a positive answer for $n \le 6$ by the log Fano condition \cite{BCHM10} (and verified directly in \cite{Cas09}), and very recent work shows it is false for $n \geq 134$ \cite{CT13b}, but the intermediate cases $7 \le n \le 133$ remain wide open.  We now express the Cox ring statement (Theorem \ref{intro:Cox}) in detail for $\M_{0,n}$ and describe some consequences.  To fix notation, let $X'$ be the toric variety from the preceding paragraph and denote its polynomial Cox ring by \[\Cox(X') = k[y_1,\ldots,y_{n-1},(x_I)_{1 \le |I|\le n-4}]\]
where $I \subset \{1,\ldots,n-1\}$.  Here the $y_i$ are homogeneous coordinates on $\PP^{n-2}$ prior to blowing up (i.e., they define the strict transforms of the coordinate hyperplanes) and the $x_I$ are the exceptional divisors.  It will be convenient to set \[z_i := \prod_{I\ni i}x_I.\]

\begin{corollary}\label{intro:M0nCox}
For any $n \ge 5$ and any field $k$ we have:
\begin{enumerate}
\item $\Cox(\M_{0,n}) = k[y_1,\ldots,y_{n-1},(x_I)_{1 \le |I|\le n-4}]^{\Ga}$, where the action is defined by $x_I \mapsto x_I$ and $y_i \mapsto y_i + \lambda z_i$, $\lambda \in \Ga$; this action is linear for $n=5$ and non-linearizable for $n \ge 6$;  the corresponding locally nilpotent derivation is an elementary monomial derivation;
\item $\Cox(\M_{0,n}) = k[y_1,\ldots,y_{n-1},(x_I)_{1 \le |I|\le n-4}] \cap k[(\frac{y_i}{z_i}-\frac{y_j}{z_j})_{1\le i,j\le n-1},(x_I^{\pm 1})_{1 \le |I|\le n-4}]$;
\item if $\Cox(\M_{0,n})$ is finitely generated for a fixed $n$, then it is algorithmic to verify this and compute a presentation; if $\Cox(\M_{0,n})$ is \emph{not} finitely generated, then it is algorithmic to produce an increasing sequence of generators.
\end{enumerate}
\end{corollary}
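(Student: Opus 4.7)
The plan is to deduce all three parts from the general Theorems~\ref{intro:quotient} and~\ref{intro:Cox} applied to Kapranov's realization of $\M_{0,n}$ as the iterated blow-up of $\PP^{n-3}$ along the proper transforms of the linear subspaces spanned by all subsets of $n-1$ general points.  In the language of the main theorem this places us in the case $t=1$: the toric model $X'$ is the iterated blow-up of $\PP^{n-2}$ along the corresponding coordinate linear subspaces (of dimension $\le n-5$), and the rational map $X'\dashrightarrow\M_{0,n}$ lifting the linear projection $\PP^{n-2}\dashrightarrow\PP^{n-3}$ is an $\mathbb{A}^1$-bundle in codimension one.  A direct fan-theoretic computation identifies $\Cox(X')$ with the claimed polynomial ring on generators $y_1,\dots,y_{n-1}$ (strict transforms of coordinate hyperplanes) and $x_I$ (exceptional divisors indexed by subsets $1\le|I|\le n-4$).

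For part (1), I would track the $\Ga$-action constructed in \S\ref{IntSec:UT} through the sequence of blow-ups.  The shift $y_{n-1}\mapsto y_{n-1}+\lambda$ on $\PP^{n-2}$ (realizing the linear projection) lifts equivariantly to each blow-up of a coordinate subspace containing the relevant point; in the Cox coordinates, the scaling of the shift by the pullback of the equation of that subspace converts $\lambda$ into $\lambda z_i$, giving the formula $y_i\mapsto y_i+\lambda z_i$, $x_I\mapsto x_I$ after symmetrizing to all $i$.  The associated locally nilpotent derivation $\partial=\sum_i z_i\,\partial/\partial y_i$ is manifestly an elementary monomial derivation in the sense of Freudenberg.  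Linearity of the action is equivalent to each $z_i$ being a single variable, which happens precisely when $n-4\le1$, i.e.\ $n=5$.  For non-linearizability when $n\ge6$, I would use the fixed-locus obstruction: a linearizable $\Ga$-action on affine space has fixed locus a linear subspace, while the fixed locus of $\partial$ is the reducible variety $V(z_1,\dots,z_{n-1})$, a union of coordinate subspaces that is not linear as soon as some $z_i$ is a product of at least two distinct $x_I$.  Part (1) then follows from Theorem~\ref{intro:Cox}.

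For part (2), I would invoke the standard localization principle for locally nilpotent derivations: for a reduced finitely generated $k$-algebra $R$ with a $\Ga$-action by $\partial$, and any $f\in R$ with $\partial f$ a unit on $R[f^{-1}]$, we have $R^{\Ga}=R\cap R[f^{-1}]^{\Ga}$, with the latter computable as a Laurent polynomial ring in a slice coordinate.  Applied after inverting all the $x_I$ (so that each $z_i$ becomes a unit and the action becomes free), one observes that $\partial(y_i/z_i)=1$, hence the differences $y_i/z_i-y_j/z_j$ are invariants; together with the $x_I^{\pm1}$ they generate the localized invariant ring because they separate orbits and the quotient is smooth of the correct dimension.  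Intersecting back with the polynomial ring yields the claimed formula, combined with part (1).

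For part (3), I would apply the Derksen--van den Essen algorithm for computing kernels of locally nilpotent derivations on polynomial rings, whose input is exactly the explicit elementary monomial derivation $\partial$ from part (1).  This algorithm computes generators of $\ker\partial$ in any bounded degree, terminates with a finite presentation when $\ker\partial$ is finitely generated, and otherwise produces an infinite strictly increasing sequence of generators; this is precisely the algorithmic content claimed.

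The step I expect to require the most care is the fixed-locus argument for non-linearizability in part (1): one must rule out not only the given presentation being non-linear but any change of coordinates making it linear.  Using that a linearizable $\Ga$-action has a smooth, irreducible, linear fixed locus of codimension equal to a sum of weight multiplicities makes the obstruction sharp, and the reducibility and non-linearity of $V(z_1,\dots,z_{n-1})$ for $n\ge6$ closes the argument.  Everything else is a direct specialization of the general machinery already established.
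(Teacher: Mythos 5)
Your proposal is correct and follows essentially the same route as the paper: part (1) via the Cox-invariance theorem together with the explicit translation action $y_i \mapsto y_i + \lambda z_i$ and a fixed-locus obstruction to linearizability, part (2) by commuting invariants with localization at the invariant elements $x_I$ so that the action becomes uniform translation on the $y_i/z_i$, and part (3) by the standard algorithms for computing kernels of the resulting locally nilpotent derivation. One small caution: the fixed locus of a linearizable $\Ga$-action need not be literally a linear subspace (only the image of one under a polynomial automorphism, hence irreducible and smooth), so the sharp invariant is reducibility of $V(z_1,\ldots,z_{n-1})$ for $n\ge 6$ --- which is exactly the component-counting argument you invoke at the end and which the paper uses as well.
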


Part (2) says the Cox ring is determined by a process of clearing denominators.  The algorithmic methods mentioned in (3) are simply standard procedures for computing, and testing termination of, the invariant ring for a $\Ga$-action \cite{GP93,vdE00}.  

Non-linear actions of $\Ga$ on affine space, and their corresponding locally nilpotent derivations, have been an active area of interest for many years \cite{vdE00}.  Elementary monomial derivations have in particular received special attention; several properties that guarantee finite generation of the kernel, and also that guarantee non-finite generation, have been found \cite{Kho01, Kho04, Kur04, Tan07, Kur09}.  Thus Corollary \ref{intro:M0nCox} provides a novel method of approach and places the Hu-Keel question in the context of an extensive body of literature.  This result is also the basis for a study of the monoid of effective divisor classes in $\M_{0,n}$ and the relation between the Cox ring and effective cone of $\M_{0,n}$; see \cite{DGJ14}.  Motivic, topological, and higher-codimension cycle-theoretic investigations are the subject of future work.

The fact that this $\Ga$-action is linear for $n=5$ immediately implies the effective cone of $\M_{0,5}$ is generated by boundary divisors (see \S\ref{sec:boundary}), so the non-linearity of the action for all $n \ge 6$ is precisely what allows for the interesting divisorial geometry observed by Keel, Vermeire, and Castravet-Tevelev \cite{Ver02,GKM02,CT13a} (and observed further in \cite{DGJ14}).  Moreover, by the classical Weitzenb\"ock theorem, any linear $\Ga$-action on affine space (in characteristic zero) has finitely generated ring of invariants, so again Corollary \ref{intro:M0nCox} gives a new perspective of why the Hu-Keel question is indeed an intriguing and non-trivial one.



\subsection{Organization}
We develop our main construction and results entirely in the context of $\M_{0,n}$ first, since essentially all of the phenomena that arise in general are elegantly, and concretely, illustrated here.  We begin in \S2 by recalling some background from Kapranov's blow-up construction and Hassett's moduli of weighted pointed curves.  The heart of the paper is \S3, where we describe the main construction, from a modular perspective, and prove all our results about $\M_{0,n}$.   We conclude in \S4 by showing how an almost trivial extension of the previous section allows for a generalization to arbitrary linear subspace blow-ups.


\begin{ack} 
We thank Dan Abramovich, Aravind Asok, Charles Doran, Angela Gibney, Brendan Hassett, Frances Kirwan, Andrew Kresch, Dave Jensen, Diane Maclagan, Han-Bom Moon, and Rahul Pandharipande for helpful conversations.  We also thank the many people who saw and commented on various aspects of this project in seminar and conference talks since 2012.  The first author was supported in part by the SNF, the second by the SNF and an NSF postdoctoral fellowship and the kind hospitality of the FIM at ETH Z\"{u}rich.  
\end{ack}


\section{Background}


\subsection{Kapranov's projective presentation}\label{section:KapBlow}

Let us recall Kapranov's construction of $\M_{0,n}$ in \cite{Kap93a}.  Given $n$ points $p_1,\ldots,p_n \in \mathbb{P}^{n-2}$ in general linear position, the set of rational normal curves passing through these points is naturally isomorphic to $M_{0,n}$; the closure in the relevant Hilbert scheme or Chow variety is isomorphic to $\M_{0,n}$; and, the universal curve over $\M_{0,n}$ is the restriction of the universal family over this Hilbert scheme/Chow variety \cite[Theorem 0.1]{Kap93a}.  

For each $1\le i \le n$, the set of lines in $\mathbb{P}^{n-2}$ passing through $p_i$ is isomorphic to $\mathbb{P}^{n-3}$.  The map sending each $C\in \M_{0,n}$ via the above realization to the embedded tangent line to $C$ at $p_i$ is a birational morphism $\M_{0,n} \rightarrow \mathbb{P}^{n-3}$, and it is induced by the complete linear series of the tautological class $\psi_i$ on $\M_{0,n}$ \cite[Propositions 2.8, 2.9]{Kap93a}.  Each of the $n-1$ points $p_j$, $j\ne i$, determines a line through $p_i$ and hence a point $q_j \in \mathbb{P}^{n-3}$.  The morphism $\M_{0,n} \rightarrow \mathbb{P}^{n-3}$ decomposes as a composition of blow-ups, centered at these $n-1$ points and all linear subspaces spanned by them \cite[Theorem 4.3.3]{Kap93b}.  After linear change of coordinates one can assume that $n-2$ of these $n-1$ points in $\mathbb{P}^{n-3}$ are coordinate points.  The order Kapranov uses for this iterated blow-up is to first blow up the coordinate points and all coordinate linear subspaces, ordered according to dimension, and then the remaining point and all the remaining linear subspaces.  Hassett shows in \cite[\S6.2]{Has03} that one can treat all points on equal footing and simply blow up all $n-1$ points $q_j$ and the linear subspaces spanned by them, ordered according to dimension.  Moreover, while Kapranov works over $\mathbb{C}$, it follows from \cite[\S6]{Has03} that these isomorphisms between $\M_{0,n}$ and these iterated blow-ups of $\PP^{n-3}$ are defined over $\mathbb{Z}$.

These blow-up constructions yield a natural basis for the Picard group.  Indeed, \[\Pic(\M_{0,n}) = \mathbb{Z}H\bigoplus_{I \subseteq [n-1]}\mathbb{Z}E_I,\] where $H$ is the strict transform of the hyperplane class in $\mathbb{P}^{n-3}$, $E_I$ is the exceptional divisor lying over the linear subspace spanned by $\{q_i\}_{i\in I}$, and the sum runs over subsets $I \subseteq [n-1] := \{1,\ldots,n-1\}$ satisfying $1 \le |I| \le n-4$.

Linear projection from $p_k\in\mathbb{P}^{n-2}$ yields $n-1$ points of $\mathbb{P}^{n-3}$ in general linear position.  This projection sends rational normal curves in $\mathbb{P}^{n-2}$ passing through the original $n$ points to rational normal curves in $\mathbb{P}^{n-3}$ passing through these $n-1$ points; in fact, this induces a morphism $\M_{0,n} \rightarrow \M_{0,n-1}$ which is the forgetful map dropping the $k^{\text{th}}$ marked point and stabilizing \cite[Proposition 2.7]{Kap93a}.  If we fix the class $\psi_i$ on both spaces, $i\ne k$, then this morphism is identified with a map of iterated blow-ups $\Bl\PP^{n-3} \rightarrow \Bl\PP^{n-4}$ which, upon blowing down, corresponds to linear projection $\mathbb{P}^{n-3} \dashrightarrow \mathbb{P}^{n-4}$ from $q_k$.

The last ingredient to recall is a description of the boundary divisors.  Choose $\psi_n$ to identify $\M_{0,n}$ with the blow-up of $\mathbb{P}^{n-3}$ at $q_1,\ldots,q_{n-1}$ and the linear spaces they span.  The boundary divisor parameterizing curves with a node separating $\{p_i\}_{i\in I} \cup \{p_n\}$ from the remaining marked points is the exceptional divisor $E_I$ if $|I| \le n-4$, and it is the strict transform of the linear subspace $\langle q_i \rangle_{i\in I}$ if $|I| = n-3$.


\subsection{Weighted pointed curves}
Hassett introduced in \cite{Has03} a family of compactifications of the moduli space of marked curves, $\mathcal{M}_{g,n}$, that are smooth, proper Deligne-Mumford stacks over $\Spec \mathbb{Z}$ admitting projective coarse moduli schemes.  These stacks are denoted by $\MM_{g,\mathcal{A}}$, where $\mathcal{A} = (a_1,\ldots,a_n)\in \mathbb{Q}^n$ satisfies $0 < a_i \le 1$ and $2g-2+|\mathcal{A}| > 0$.  The vector $\mathcal{A}$ is viewed as an assignment of a rational weight to each marked point, and this weight data determines which collisions of points are allowed.  More precisely, a Hassett $\mathcal{A}$-stable curve $(C,p_1,\ldots,p_n)$ is a connected, nodal curve such that i) the marked points $p_i$ avoid the nodes, ii) the weights satisfy $\sum_{i \in I}a_i \le 1$ if $\{p_i\}_{i\in I}$ coincide, and iii) the ``log canonical'' divisor $K_C+\sum_{i=1}^n a_ip_i$ is ample.  The usual Deligne-Mumford compactification is obtained by taking $\mathcal{A} = (1,\ldots,1)$.  If $\mathcal{A}'  = (a'_1,\ldots,a'_n)$ satisfies $a'_i \le a_i$ for all $i$ then there is a \emph{reduction} morphism $\MM_{g,\mathcal{A}} \rightarrow \MM_{g,\mathcal{A}'}$ which in modular terms is defined by contracting all irreducible components of a curve for which the log canonical divisor loses ampleness upon reducing the weights from $\mathcal{A}$ to $\mathcal{A}'$.  There are also forgetful morphisms given by dropping marked points and then stabilizing.  See \cite{Has03}.

In genus zero, $g=0$, the ampleness condition in the definition of stability is equivalent to the following: for each irreducible component $C' \subseteq C$, the total weight of marked points on $C'$ plus the number of nodes on $C'$ must be strictly greater than two.  In particular, if $|\mathcal{A}| \le m$ for some $m \in \mathbb{Z}$, then $\mathcal{A}$-stable curves can have at most $m-1$ \emph{tails}, i.e., components with a single node.  A well-studied example of a Hassett space parameterizing marked \emph{chains} of projective lines---that is, curves having at most two tails---is the so-called Losev-Manin space \cite{LM00}.  This is the toric variety associated to the fan of Weyl chambers for the type A root system \cite{BB11}.  Hassett shows in \cite[\S6.4]{Has03} that this space is of the form $\M_{0,\mathcal{A}}$ for an appropriate weight vector $\mathcal{A}$ and that it is the final toric variety appearing in Kapranov's iterated blow-up construction of $\M_{0,n}$.


\section{Main construction}

In this section we explain how Hassett's moduli spaces of weighted pointed curves provide a natural modular interpretation for the ``toric model'' of $\M_{0,n}$ that we are interested in, and then we describe the relation between this model and $\M_{0,n}$ in terms of i) the projective geometry of a rational map, ii) the moduli functors, and iii) a unipotent action relating universal torsors.


\subsection{Toric model as a Hassett space}

Let $X_n$ denote the iterated blow-up of $\mathbb{P}^{n-2}$ along all coordinate points $q_1,\ldots,q_{n-1}\in\mathbb{P}^{n-2}$ and all coordinate linear subspaces, ordered by increasing dimension, up to codimension three.  Recall that to construct $\M_{0,n+1}$ we would also have blown-up the codimension two linear subspaces, yielding Losev-Manin, then all linear spaces spanned by these and an additional non-coordinate point, without loss of generality $q_n = (1: \cdots : 1)$.  The variety $X_n$ can be viewed as a Hassett compactification of $M_{0,n+1}$, namely, $X_n \cong \M_{0,\mathcal{A}}$ for $\mathcal{A} = (\frac{1}{n-1},\ldots,\frac{1}{n-1},\frac{n-3}{n-1},1)$ \cite[\S6.1]{Has03}.  Here the last marked point, $p_{n+1}$, corresponds to the choice of $\psi$-class, while the penultimate marked point, $p_n$, corresponds to the non-blown-up point $q_n\in\mathbb{P}^{n-2}$.  Note that $X_n$ parameterizes chains, since $|\mathcal{A}| = 2 + \frac{n-3}{n-1} \le 3$, and there is a reduction morphism $\M_{0,n+1} \rightarrow X_n$.


\subsection{Forgetting a point}

The forgetful morphism $\M_{0,n+1} \rightarrow \M_{0,n}$ descends to a rational map from each Hassett compactification of $M_{0,n+1}$.  By choosing the weight vector $\mathcal{A}$ above and forgetting the penultimate point $p_n$, we obtain the diagram 
\begin{equation}\label{diag:ratmap}\xymatrix{ & \M_{0,n+1} \ar[dl] \ar[dr] & \\ X_n = \M_{0,\mathcal{A}} \ar@{-->}[rr]^{\varphi_n} & & \M_{0,n} }\end{equation}
which becomes, upon choosing the last $\psi$-class in all three spaces,
\[\xymatrix@C-50pt{ & \Bl_{\mathrm{codim}~2}\cdots\Bl_{n \mathrm{pts}} \mathbb{P}^{n-2} \ar[dl] \ar[dr] & \\\Bl_{\mathrm{codim}~3}\cdots\Bl_{n-1 \mathrm{pts}} \mathbb{P}^{n-2} \ar@{-->}[rr]^{\varphi_n} & & \Bl_{\mathrm{codim}~2}\cdots\Bl_{n-1 \mathrm{pts}} \mathbb{P}^{n-3}.}\]

As above, we label the blown-up points $q_1,\ldots,q_n \in \mathbb{P}^{n-2}$ for $\M_{0,n+1}$ and $q_n$ is the point not blown-up for $X_n$.  As mentioned earlier, upon blowing down all exceptional divisors this rational map is simply the linear projection from $q_n \in\mathbb{P}^{n-2}$.  In fact, a similar interpretation holds prior to blowing down.

\begin{lemma}\label{lem:linproj}
The rational map $\varphi_n$ is induced by linear projection from $q_n$ and in $E_I$ from the strict transform of the linear subspace $\langle q_n, \{q_i\}_{i\in I} \rangle$.  The base locus is the union of the strict transforms of these linear subspaces for $|I| = n-4$.
\end{lemma}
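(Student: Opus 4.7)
The plan is to exploit the modular description. The forgetful morphism $\pi\colon\M_{0,n+1}\to\M_{0,n}$ dropping $p_n$ is a morphism and the reduction morphism $r\colon\M_{0,n+1}\to X_n$ is birational, so $\varphi_n=\pi\circ r^{-1}$ is a well-defined rational map on the open subset where $r$ is an isomorphism. Throughout I would fix the $\psi_{n+1}$ class on all three spaces so that, via \S\ref{section:KapBlow}, they are identified with specific iterated blow-ups of $\PP^{n-2}$, $\PP^{n-2}$, and $\PP^{n-3}$, and so that the Kapranov blow-down $\M_{0,n+1}\to\PP^{n-2}$ factors through $r$.

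For the first assertion, composing $\varphi_n$ with the blow-down $\M_{0,n}\to\PP^{n-3}$ yields the same rational map $X_n\dashrightarrow\PP^{n-3}$ as composing $\pi$ with this blow-down, and the latter is linear projection from $q_n$ by \cite[Prop.~2.7]{Kap93a}. For the behavior on $E_I$, the partition $\{I\cup\{p_{n+1}\},\{p_n\}\cup([n-1]\setminus I)\}$ gives a stable Hassett boundary divisor in $X_n$ precisely when $|I|\le n-4$ and recovers $E_I$. This boundary factors as a product of Hassett spaces on which $\pi$ acts as a smaller forgetful map on the factor containing $p_n$, and applying Kapranov again to that factor identifies the fiberwise restriction of $\pi$ on $E_I\to L_I$ as linear projection from the point in the projectivized normal space $\PP^{n-|I|-2}$ corresponding to the $q_n$-direction, which is exactly where the strict transform of $\langle q_n,L_I\rangle$ meets the fiber.

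For the base locus, the universal property of blow-ups says that $\varphi_n$ extends across $x\in X_n$ precisely when the pull-back of the ideal sheaf of each blown-up center $\tilde L_I\subset\PP^{n-3}$ is locally invertible at $x$. Since these centers pull back under linear projection from $q_n$ to the subspaces $\langle q_n,L_I\rangle\subset\PP^{n-2}$, the question becomes local Cartierness of the strict transform of $\langle q_n,L_I\rangle$ in $X_n$. A local chart computation in the iterated blow-up should confirm that for $|I|\le n-5$ the subsequent blow-ups along $L_J$ for $I\subsetneq J$ with $|J|\le n-4$ resolve this, while for $|I|=n-4$ the subspace $\langle q_n,L_I\rangle$ is codimension two, contains the unblown-up point $q_n$, and meets no further coordinate blow-up center, so its strict transform remains codimension two and forms the base locus. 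The main obstacle is cleanly establishing this dichotomy between $|I|<n-4$ (resolved) and $|I|=n-4$ (unresolved) via the chart analysis of the iterated coordinate blow-up.
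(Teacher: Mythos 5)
The paper itself offers no argument here---its proof of Lemma~\ref{lem:linproj} is the single sentence ``This is elementary projective geometry''---so your universal-property framework is a reasonable way to substantiate the claim. The trouble is that the decisive step is exactly the one you leave as an acknowledged obstacle, and the dichotomy you sketch is not correct as stated. For $|I|\le n-5$ the pullback to $X_n$ of the ideal of the corresponding center (equivalently, of its cone $\langle q_n,\{q_i\}_{i\in I}\rangle$, of codimension at least three) is \emph{not} principalized by the coordinate blow-ups: already for $n=6$ and $I=\{i\}$, at a generic point of the strict transform of the line $\langle q_i,q_n\rangle$ the map $X_6\to\PP^4$ is an isomorphism and the pulled-back ideal still cuts out a codimension-three locus through that point, so it is not invertible there. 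What the lemma actually requires is the two-sided statement that the non-invertibility locus of \emph{every} pulled-back center ideal (taken in the iterated, strict-transform sense, since $\M_{0,n}$ is built by blowing up points first and then strict transforms of larger subspaces) is \emph{contained in} the union of the strict transforms of the cones $\langle q_n,\{q_i\}_{i\in I}\rangle$ with $|I|=n-4$, and that conversely every point of those strict transforms, including points lying on exceptional divisors, is a point of indeterminacy. Your chart analysis is meant to deliver exactly this, but it is only asserted (``should confirm''), not carried out, and as phrased (``for $|I|\le n-5$ the subsequent blow-ups resolve this'') it claims something false.

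There are also two factual slips in the $|I|=n-4$ case. The cone $\langle q_n,\{q_i\}_{i\in I}\rangle$ does meet blow-up centers: it contains $\langle q_i\rangle_{i\in J}$ for every $J\subseteq I$, and for $n\ge 7$ it also meets the center $\langle q_i\rangle_{i\in[n-1]\setminus I}$ (at the point with equal coordinates on $[n-1]\setminus I$ and zero on $I$), a center disjoint from $\{q_i\}_{i\in I}$. So the justification ``contains the unblown-up point $q_n$ and meets no further coordinate blow-up center, hence its strict transform remains codimension two and forms the base locus'' does not stand; the behaviour of the strict transform over these intersections is precisely where a genuine check is needed (this intersection pattern is also the source of the failure of surjectivity for $n\ge 7$ noted after Corollary~\ref{cor:image}). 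The first part of your proposal---that on the base $\varphi_n$ is projection from $q_n$, via the forgetful map and Kapranov's identification, and the Hassett boundary description of $E_I$---is fine in outline, though the identification of the restriction to $E_I$ with projection from the indicated subspace is likewise only sketched.
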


\begin{proof}
This is elementary projective geometry.
\end{proof}

\begin{example}
The rational map $\varphi_5: X_5 = \M_{0,(\frac{1}{4},\frac{1}{4},\frac{1}{4},\frac{1}{4},\frac{1}{2},1)} \dashrightarrow \M_{0,5}$ takes the form $\Bl_{4 \mathrm{pts}}\mathbb{P}^3 \dashrightarrow \Bl_{4 \mathrm{pts}}\mathbb{P}^2$, and the linear projection is from a point $q_5$ in $\mathbb{P}^3$ as well as the unique point of each exceptional $E_i \cong \mathbb{P}^2$ given by the strict transform of the line $\overline{q_iq_5}$.  Note that this entire line is in the base locus since under projection from $q_5$ it gets sent to the projection point of $E_i$.

The map $\varphi_6 : \Bl_{10 \mathrm{lines}}\Bl_{5 \mathrm{pts}}\mathbb{P}^4 \dashrightarrow \Bl_{10 \mathrm{lines}}\Bl_{5 \mathrm{pts}}\mathbb{P}^3$ is as follows: in the base $\mathbb{P}^4$ we project from $q_6$; over each $q_i$, $1\le i \le 5$, the exceptional divisor is $E_i \cong \Bl_{4 \mathrm{pts}}\mathbb{P}^3$ and the line $\overline{q_iq_6}$ meets this at a single point which we project from, whereas the four planes $\langle q_i,q_j,q_6 \rangle$, $1 \le j \le 5$, $j \ne i$, meet $E_i$ in the four lines between this projection point and the four blown-up points in $E_i$; finally, for each of the ten blown-up lines we have the exceptional divisor $E_{ij}$ which is a $\mathbb{P}^2$-bundle and we obtain a $\mathbb{P}^1$-bundle by projecting linearly from the line inside it given by the strict transform of the plane $\langle q_i,q_j,q_6\rangle$, which in particular yields linear projection from a point in $E_{ij} \cap E_i \cong \mathbb{P}^2$.  Note that $\varphi_6$ restricted to $E_i \cong X_5$ is identical to $\varphi_5$.
\end{example}


\subsection{Modular interpretation}\label{sec:modular}

The map $\varphi_n$ forgets the marked point $p_n$ then stabilizes.  The locus where this is not defined admits a simple modular description.

\begin{proposition}\label{prop:baseloc}
The base locus of $\varphi_n : X_n = \M_{0,\mathcal{A}} \dashrightarrow \M_{0,n}$ is the locus of curves such that at least three points among the first $n-1$ coincide.
\end{proposition}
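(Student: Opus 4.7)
The plan is to analyze $\varphi_n$ through the factorization in diagram~(\ref{diag:ratmap}). Let $\pi\colon\M_{0,n+1}\to X_n$ be the reduction morphism and $f\colon\M_{0,n+1}\to\M_{0,n}$ the forgetful morphism dropping $p_n$, so that $\varphi_n = f\circ \pi^{-1}$ as rational maps. Since $\pi$ is a projective birational morphism with connected fibers, $\varphi_n$ is defined at $[C]\in X_n$ precisely when $f$ is constant on $\pi^{-1}([C])$. I would therefore enumerate the positive-dimensional fibers of $\pi$ and determine, for each, whether the $\mathcal{A}$-unstable tails that $\pi$ contracts survive (with their moduli) the $(1,\ldots,1)$-stabilization performed by $f$.

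By Hassett's description of reduction morphisms, $\pi^{-1}([C])$ factors as a product of moduli spaces $\M_{0,k+1}$, one for each $\mathcal{A}$-unstable rational tail needed to separate a collision of $C$ into distinct markings on $\M_{0,n+1}$; such a factor is positive-dimensional exactly when the corresponding tail carries $k\ge 3$ marked points. A direct weight count using $\mathcal{A} = (\tfrac{1}{n-1},\ldots,\tfrac{1}{n-1},\tfrac{n-3}{n-1},1)$ shows that a Deligne--Mumford-stable tail with $k\ge 3$ marks has total $\mathcal{A}$-weight $\le 1$ (hence is contracted by $\pi$) in exactly two configurations: (a) the tail carries $k\ge 3$ of the low-weight marks $p_1,\ldots,p_{n-1}$ and nothing else, giving an $\M_{0,k+1}$-factor; or (b) the tail carries $p_n$ together with exactly two low-weight marks, giving an $\M_{0,4}\cong\PP^1$-factor. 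The remaining possibilities are ruled out by the weights: $p_n$ together with three or more low-weight marks already has $\mathcal{A}$-weight $>1$ and so is not contracted, while $p_{n+1}$ has weight $1$ and never participates in a collision.

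Finally, one checks which of these factors $f$ collapses. In case~(a) the forgotten mark $p_n$ lies outside the unstable tail; after dropping $p_n$ the tail still has $k\ge 3$ marks plus a node, so it remains $(1,\ldots,1)$-stable, and distinct points of $\M_{0,k+1}$ yield non-isomorphic curves in $\M_{0,n}$, so $\varphi_n$ is \emph{not} defined. In case~(b), after dropping $p_n$ the tail has only two marks plus a node---a rigid configuration---so the entire $\PP^1$-factor collapses to a single point of $\M_{0,n}$ and $\varphi_n$ is defined. Combining these local analyses over all collision points of $C$, the base locus of $\varphi_n$ is exactly the locus on which at least one tail of type~(a) occurs, i.e.\ where at least three of $p_1,\ldots,p_{n-1}$ coincide. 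The only real obstacle is the weight bookkeeping needed to enumerate cases (a) and (b) and rule out the alternatives, which is elementary given~$\mathcal{A}$.
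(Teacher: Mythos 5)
Your proof is correct, but it takes a genuinely different route from the paper's. The paper deduces the proposition in a few lines from Lemma \ref{lem:linproj}: the base locus is already known projectively as the strict transforms of the spans $\langle q_n,\{q_i\}_{i\in I}\rangle$ with $|I|=n-4$, and the proof merely translates these loci into modular terms, observing that they are the reduction-images of the boundary divisors $E_I$ of $\M_{0,n+1}$ whose three-pointed component gets contracted, i.e.\ the loci where the three light points indexed by $[n-1]\setminus I$ collide. You instead argue entirely on the modular side: you resolve $\varphi_n$ by diagram (\ref{diag:ratmap}), invoke the standard rigidity/descent criterion that a rational map through a birational contraction with connected fibers is defined at a point exactly when the resolved morphism is constant on the fiber, describe the fibers of the Hassett reduction as products of $\M_{0,k+1}$'s indexed by collision sets, and do the weight bookkeeping to see that the only positive-dimensional factors are those from $k\ge 3$ light points (not collapsed by the forgetful map) or from $p_n$ plus two light points (collapsed, since forgetting $p_n$ leaves an $\M_{0,3}$). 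This costs you two standard inputs the paper does not need here---the rigidity criterion, which you state without justification but which is indeed standard for a proper birational map onto the smooth variety $X_n$, and Hassett's description of reduction fibers---while buying a proof independent of the explicit linear-projection coordinates of Lemma \ref{lem:linproj}; in fact your fiber analysis essentially anticipates the paper's own argument for Proposition \ref{prop:fiber}, so the two statements come out of a single computation in your approach. Minor quibbles: contracted subcurves are trees rather than single irreducible tails, so your case analysis is cleaner if phrased in terms of the colliding sets on the $\mathcal{A}$-stable curve (which is what your factor description actually uses), and the conclusion ``$\varphi_n$ is defined'' in case (b) should be read, as your final sentence does, as holding only when no type-(a) factor is present anywhere on the curve.
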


\begin{proof}
This follows from Lemma \ref{lem:linproj}.  Linear subspaces of the form  $\langle q_n, \{q_i\}_{i\in I} \rangle$ for $|I| = n-4$ are precisely the image under the reduction morphism $\M_{0,n+1} \rightarrow X_n$ of the boundary divisors $E_I$ parameterizing curves with a node separating points indexed by $I \cup \{p_n,p_{n+1}\}$ from the remaining three marked points.  The component containing these three marked points is contracted by this reduction blow down.
\end{proof}
 
Fix a point $(C,p_1,\ldots,p_n) \in \M_{0,n}$ and consider diagram (\ref{diag:ratmap}).  The fiber in $\M_{0,n+1}$ is isomorphic to $C$ and its image in $X_n$ is a partial contraction of $C$.  The intersection of this curve with the complement of the base locus is the fiber under $\varphi_n$.  This fiber also admits a modular interpretation.

\begin{proposition}\label{prop:fiber}
Each non-empty fiber of $\varphi_n$ is isomorphic to $\mathbb{A}^1$.   If $(C,p_1,\ldots,p_n)\in \M_{0,n}$ is an interior point, namely $C \cong \mathbb{P}^1$, then the fiber is naturally identified with $C\setminus\{p_{n+1}\}$.  If $C$ is nodal, let $C'\subseteq C$ denote the irreducible component carrying the marked point $p_{n+1}$, and consider the connected components of $\overline{C\setminus C'}$, each of which must contain at least two marked points.
\begin{itemize}
\item If no connected component carries more than two marked points then the fiber is $C'\setminus \{p_{n+1}\}$.
\item If there are at least two connected components each carrying more than two marked points, then the fiber is empty.
\item If exactly one connected component carries more than two marked points, then either the fiber is empty---which can only occur if $C$ has at least three nodes---or the reduction $\M_{0,n+1} \rightarrow X_n$ contracts $C$ down to a chain, say $C' \cup C_1 \cup \cdots \cup C_m$, and the $\varphi_n$-fiber is $C_m$ with its unique node removed.
\end{itemize}
\end{proposition}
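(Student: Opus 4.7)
The plan is to track the fibers through the commutative diagram~(\ref{diag:ratmap}): since the reduction morphism $\M_{0,n+1}\to X_n$ is a birational surjection and $\M_{0,n+1}\to\M_{0,n}$ is the universal curve, the fiber of $\varphi_n$ over $[C]$ is the image in $X_n$ of the universal curve over $[C]$ --- namely $C$ itself --- intersected with the complement of the base locus of Proposition~\ref{prop:baseloc}. So the task is: parameterize $p_n$ by a point of $C$, form the associated stable $(n+1)$-pointed curve, reduce it to an $\mathcal{A}$-stable curve in $X_n$, and check membership in the base locus. The stability inequality for a component with $k$ marked points from $\{p_1,\ldots,p_{n-1}\}$, $\nu$ nodes, and optionally $p_n$ or $p_{n+1}$ on it reads $\tfrac{k+\epsilon(n-3)}{n-1}+\delta+\nu>2$ with $\epsilon,\delta\in\{0,1\}$; every phenomenon in the proposition arises from a borderline instance of this inequality.

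In Case~1, the only position of $p_n$ for which the reduced curve differs from $C$ itself is $p_n=p_{n+1}$: a bubble forms in $\M_{0,n+1}$, the bubble is $\mathcal{A}$-stable (weight $\tfrac{n-3}{n-1}+2>2$), but the main $C$ has weight exactly $\tfrac{n-1}{n-1}+1=2$ and so contracts, piling $p_1,\ldots,p_{n-1}$ onto a single point; this is the base locus by Proposition~\ref{prop:baseloc}. Removing it leaves $C\setminus\{p_{n+1}\}\cong\mathbb{A}^1$.

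For the nodal case, any $D_j$ carrying exactly two marked points has a leaf weight $\tfrac{2}{n-1}+1<2$ without $p_n$ and exactly $2$ with $p_n$, so it always contracts. Sub-case~(i) then reduces to Case~1 applied to $C'$ and gives $C'\setminus\{p_{n+1}\}\cong\mathbb{A}^1$. For sub-case~(ii), any placement of $p_n$ leaves some $D_j$ with $>2$ marked points that does not contain $p_n$; iteratively contracting its leaves (always unstable without $p_n$) and then its newly-unstable internal vertices collapses the entire $D_j$ to a single point on $C'$ accumulating $\geq 3$ of the first $n-1$ marked points, which lies in the base locus.

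The remaining sub-case~(iii) is the main technical content. Avoidance of the base locus forces $p_n\in D_1$. Since $|\mathcal{A}|<3$, the space $X_n$ parameterizes only chains, so $D_1$ must reduce to a chain $C_1\cup\cdots\cup C_m$ attached to $C'$. An induction on the tree shape of $D_1$, iteratively contracting unstable leaves and propagating the resulting instability inward, shows that the process either terminates in such a chain --- in which case stability forces $p_n$ to lie on the terminal component $C_m$ (a leaf of weight $\tfrac{k+n-3}{n-1}+1$, which exceeds $2$ exactly when $k\geq 3$), and the $\varphi_n$-fiber is $C_m$ minus its unique node, thus $\cong\mathbb{A}^1$ --- or else an internal vertex eventually collapses and heaps $\geq 3$ of the first $n-1$ marked points onto a single point, making the fiber empty; this obstruction requires enough branching in $D_1$ that $C$ has at least three nodes. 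Managing this tree induction, balancing the marginal stability arithmetic against the branching combinatorics, is the crux of the argument.
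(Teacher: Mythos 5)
Your proposal is correct and follows essentially the same route as the paper's proof: let $p_n$ trace out $C$, apply the reduction morphism, use Proposition~\ref{prop:baseloc} to test membership in the base locus, carry out the same marginal weight computations (the $p_n=p_{n+1}$ bubble, two-point tails contracting, full contraction of a heavy branch not containing $p_n$), and treat the single-heavy-branch case by an induction on the dual tree. The paper packages that last induction by viewing the node attaching $D$ to $C'$ as a weight-one marked point playing the role of $p_{n+1}$ and reapplying the trichotomy, which is the same recursion you describe via iterated contraction of unstable leaves.
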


\begin{proof}
The fiber over $(C,p_1,\ldots,p_n)\in \M_{0,n}$ in $\M_{0,n+1}$ is obtained by letting $p_n$ trace out the curve $C$.  Applying the reduction morphism yields a 1-parameter family of $\mathcal{A}$-stable curves, again with $p_n$ the moving point.  If $C \cong \mathbb{P}^1$, then it follows immediately from Proposition \ref{prop:baseloc} that i) for $p_n$ distinct from the other marked points the corresponding point of $X_n$ is not in the base locus, ii) when $p_n$ collides with a marked point $p_i$, $i\ne n+1$, again the corresponding point of $X_n$ is not in the base locus, and iii) the point of $X_n$ corresponding to $p_n$ colliding with $p_{n+1}$ is in the base locus, since in $\M_{0,n+1}$ this collision results in a 2-component curve, with marked points $p_n,p_{n+1}$ on one component and the remaining points on the other component, but these remaining points have total weight $(n+1)\frac{1}{n-1}=1$ so the reduction morphism contracts the component they lie on.  

If $C$ is nodal but each connected component of $\overline{C\setminus C'}$ has only two marked points, then these connected components are necessarily irreducible tails which are contracted by the reduction morphism, since the total weight they carry when $p_n$ traces them out is $\frac{n-3}{n-1}+(2)\frac{1}{n-1} = 1$, and then the same argument as in the smooth case shows that the fiber is $C'\setminus \{p_{n+1}\}$.

If there are multiple connected components of $\overline{C\setminus C'}$ carrying more than two marked points each, then as $p_n$ traces out the curve there will always be a collision of at least three points among $p_1,\ldots,p_{n-1}$ upon applying the reduction morphism, so the entire curve lies in the base locus and hence the fiber is empty.

Finally, consider the case that among the connected components of $\overline{C\setminus C'}$ there is a unique one, call it $D$, with at least three marked points.  The image under reduction of each irreducible component not in $D$ lies in the base locus, since $D$ is contracted to a point as $p_n$ traces out such a component.  As for $D$ itself, we may view the node attaching it to $C'$ as a marked point of weight one lying on an irreducible component $D' \subseteq D$.  Then by considering the connected components of $\overline{D\setminus D'}$ we can apply the same analysis as above, this time with the distinguished node on $D'$ playing the role of the $\psi$-point $p_{n+1}$ on $C'$, since both have weight one from a Hassett-stability perspective.  The final statement of the proposition follows by applying this analysis inductively.
\end{proof}

An immediate consequence is the following:

\begin{corollary}\label{cor:image}
The rational map $\varphi_n$ is surjective for $n \le 6$, and the complement of its image is codimension 2 for $n \ge 7$.
\end{corollary}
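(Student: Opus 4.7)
The plan is to read off fiber emptiness directly from Proposition~\ref{prop:fiber} and translate the combinatorial conditions into codimension bounds on $\M_{0,n}$. By that proposition, $\varphi_n^{-1}(C,\vec p)$ is empty in exactly two situations: (E1) at least two connected components of $\overline{C\setminus C'}$ each carry $>2$ marked points, or (E2) exactly one does yet the reduction fails to contract $C$ to a chain, in which case $C$ must have at least three nodes.

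These two conditions carry immediate numerical consequences. In (E1), the $\geq 3$ markings on each of $\geq 2$ oversized components, together with the distinguished point $p_{n+1}$ on $C'$, force $n\geq 1+3+3=7$; moreover the oversized components attach to $C'$ through distinct nodes, so $C$ has $\geq 2$ nodes and sits in codimension $\geq 2$. In (E2), the three-node condition alone puts $C$ in codimension $\geq 3$. Thus for $n=5$ both (E1) and (E2) are ruled out, by the counting bound $n<7$ and by $\dim\M_{0,5}=2<3$ respectively, giving surjectivity. For $n\geq 7$ the empty-fiber locus is contained in the codimension-$\geq 2$ part of $\M_{0,n}$, yielding the codimension-$2$ claim for $\M_{0,n}\setminus\Image\varphi_n$.

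The only delicate case is $n=6$, where (E1) is still excluded but (E2) is a priori possible at the (finitely many) codimension-$3$ boundary strata---those $C$ with four irreducible components and three nodes. My plan here is to enumerate the topological types in which the component $C'$ carrying $p_{n+1}$ places one in the ``exactly one'' alternative: (a) a $4$-chain with $C'$ at an endpoint, (b) a $4$-chain with $C'$ at an interior vertex, or (c) a $4$-star with $C'$ at a leaf. In each of (a)--(c) I would run the reduction $\M_{0,7}\to X_6$ with Hassett weights $\mathcal{A}=(\tfrac15,\tfrac15,\tfrac15,\tfrac15,\tfrac15,\tfrac35,1)$ and check that the unstable components of $\overline{C\setminus C'}$ collapse onto their surviving neighbors to yield a chain starting at $C'$; the non-empty alternative of Proposition~\ref{prop:fiber} then forces the fiber to be an $\mathbb{A}^1$. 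This finite case-check is the main (and only) obstacle, and it is unavoidable because the dimension bound excluding (E2) just barely fails at $n=6$.
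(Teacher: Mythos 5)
Your overall strategy---reading off the empty-fiber conditions from Proposition \ref{prop:fiber} and bounding where they can occur---is exactly how the paper intends the corollary to follow (it is stated there as an immediate consequence of that proposition), and your counting in case (E1) is correct. But as written there are two genuine holes. First, the $n=6$ case is precisely the point you defer: you enumerate the types (a)--(c) and say you \emph{would} run the reduction with the weights $\mathcal{A}$ and check that each contracts to a chain, but you never carry this out, so surjectivity for $n=6$ is not actually established. The check does succeed, but it is also unnecessary: in the proof of Proposition \ref{prop:fiber} the ``exactly one oversized component'' case is resolved by recursion inside $D$, and the fiber comes out empty only if at some stage of that recursion one encounters \emph{two} disjoint connected subcurves, disjoint from the component carrying $p_{n+1}$, each carrying more than two of the marked points $p_1,\ldots,p_{n-1}$. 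Any empty fiber therefore forces $n-1\ge 6$, i.e.\ $n\ge 7$, with no case analysis or dimension count at $n=6$ at all; it is your coarser invocation of ``at least three nodes, hence codimension $\ge 3$'' that manufactures the spurious borderline case.

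Second, for $n\ge 7$ you only show the empty-fiber locus is \emph{contained} in the codimension-$\ge 2$ boundary, whereas the corollary asserts the complement of the image \emph{is} of codimension $2$, in particular that $\varphi_n$ is not surjective. You must exhibit a codimension-$2$ locus of empty fibers: for instance the stratum of curves with two tails carrying $\{p_1,p_2,p_3\}$ and $\{p_4,p_5,p_6\}$ and all remaining markings (including $p_{n+1}$) on the spine has empty fibers by the second bullet of Proposition \ref{prop:fiber} and has codimension $2$; this is the $\PP^1\times\PP^1$ over $\langle q_1,q_2,q_3\rangle\cap\langle q_4,q_5,q_6\rangle$ appearing in the paper's remark for $n=7$. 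With these two points supplied your argument is complete and agrees with the paper's reasoning.
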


\begin{remark}
The lack of surjectivity is essentially due to the fact that for $n \ge 7$ there exist disjoint linear subspaces blown-up to produce $X_n$ such that the corresponding linear subspaces blown-up to produce $\M_{0,n}$ are not disjoint.  For instance, in $\M_{0,7}$ there is a $\mathbb{P}^1\times\mathbb{P}^1$ lying above the point $\langle q_1,q_2,q_3\rangle \cap \langle q_4,q_5,q_6\rangle \in \mathbb{P}^4$.  The generic point of this surface corresponds to a curve with three components: two tails carrying $p_1,p_2,p_3$ and $p_4,p_5,p_6$, respectively, and an inner component carrying the $\psi$-class point $p_7$.  By Proposition \ref{prop:fiber}, this $\mathbb{P}^1\times\mathbb{P}^1$ lies outside the image of $\varphi_7$.
\end{remark}


\subsection{Lifting to the universal torsor and the main theorems}\label{sec:lift}

The universal torsor of $X_n$, as with all toric varieties, is naturally an open subset of affine space---namely Spec of the polynomial Cox ring.  If we denote the Cox ring of $\mathbb{P}^{n-2}$ by $k[y_1,\ldots,y_{n-1}]$, then we have \[\Cox(X_n) = k[y_1,\ldots,y_{n-1},(x_I)_{|I|\le n-4}]\] where $I \subseteq [n-1]$ and each locus $\{x_I=0\}$ corresponds to the exceptional divisor over the subspace spanned by $\{q_i\}_{i\in I}$.  

\begin{theorem}\label{thm:Cox} For any $n \ge 5$ we have $\Cox(\M_{0,n}) = \Cox(X_n)^{\mathbb{G}_a}$ where \[x_I \mapsto x_I, ~y_j \mapsto y_j + \lambda\prod_{I\ni j}x_I, ~\lambda\in \mathbb{G}_a.\]  This action is non-linearizable for $n \ge 6$.
\end{theorem}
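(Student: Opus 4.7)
The plan is to exploit the rational fibration $\varphi_n : X_n \dashrightarrow \M_{0,n}$ of Lemma \ref{lem:linproj}, whose non-empty fibers are $\mathbb{A}^1$ by Proposition \ref{prop:fiber}, and to lift it to a $\mathbb{G}_a$-torsor on universal torsors. Kapranov's blow-up presentation of $\M_{0,n}$ and the construction of $X_n$ identify $\Pic(X_n) = \Pic(\M_{0,n}) = \mathbb{Z}H \oplus \bigoplus_{I} \mathbb{Z} E_I$ canonically (both spaces have the same collection of exceptional divisors of dimension $\le n-5$). Moreover the base locus of $\varphi_n$ (Proposition \ref{prop:baseloc}) and the complement of its image (Corollary \ref{cor:image}) both have codimension at least two, and since Cox rings are insensitive to codim-$\ge 2$ modifications of a normal variety, one may freely work on the loci where $\varphi_n$ is a morphism with $\mathbb{A}^1$-fibers.

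First I would lift $\varphi_n$ to a morphism $\widetilde{\varphi}_n$ between open subsets of the universal torsors $\mathcal{T}_{X_n} \subset \Spec\Cox(X_n)$ and $\mathcal{T}_{\M_{0,n}}$ and argue that $\widetilde{\varphi}_n$ is a $\mathbb{G}_a$-torsor. This is natural since the structural $\Pic$-torus on both sides is the same, so the extra $\mathbb{A}^1$-fiber direction acquires an honest $\mathbb{G}_a$-action. To pin down its form I would compute how a translation along a fiber line through $q_n = (1{:}\cdots{:}1)$ on $\mathbb{P}^{n-2}$ transforms the Cox variables: the strict-transform variable $y_j$ picks up a correction equal to the product of the exceptional-divisor variables attached to blowups at subspaces indexed by sets containing $j$, which is exactly $z_j = \prod_{I \ni j} x_I$. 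This produces the stated prescription $y_j \mapsto y_j + \lambda z_j$, $x_I \mapsto x_I$, which one checks directly is a well-defined $\mathbb{G}_a$-action on $\Spec\Cox(X_n)$ preserving the universal torsor locus. Passage to global functions then gives $\Cox(\M_{0,n}) = \Cox(X_n)^{\mathbb{G}_a}$.

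For non-linearizability when $n \ge 6$ I would compute the scheme-theoretic fixed locus in $\mathbb{A}^N = \Spec\Cox(X_n)$, namely
\[
\mathrm{Fix} = V(z_1, \ldots, z_{n-1}).
\]
For $n = 5$ each $z_j = x_{\{j\}}$ is a single variable, so the action is manifestly linear. For $n \ge 6$ each $z_j$ is a squarefree monomial of degree $\ge 2$, so $(z_1, \ldots, z_{n-1})$ is a radical monomial ideal and $\mathrm{Fix}$ is a union of coordinate subspaces of various dimensions: concretely, for $n = 6$ one has the distinct components $V(x_{\{1\}}, \ldots, x_{\{5\}})$ of codimension five and $V(x_{\{1,2\}}, x_{\{3\}}, x_{\{4\}}, x_{\{5\}})$ of codimension four, with analogous components for larger $n$. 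On the other hand, any linear $\mathbb{G}_a$-action on $\mathbb{A}^N$ is of the form $\exp(\lambda N)$ for a nilpotent matrix $N$, with fixed locus $\ker N$ — an irreducible linear subspace. Since a polynomial automorphism conjugating one action to another induces an isomorphism of reduced fixed loci, reducibility of $\mathrm{Fix}$ obstructs linearization.

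The main obstacle I anticipate is the careful codimension-two bookkeeping in Step 2: verifying that the lift $\widetilde{\varphi}_n$ really is a $\mathbb{G}_a$-torsor on an open subset of $\mathcal{T}_{X_n}$ whose complement is codim $\ge 2$, and that the normalization of the fiber coordinate is exactly such that the action is given by $y_j \mapsto y_j + \lambda z_j$ on the nose (rather than up to a unit or reparametrization). Once this geometric matching is in place, both the invariant-ring identification and the fixed-locus non-linearizability argument are routine.
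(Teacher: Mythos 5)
Your overall strategy coincides with the paper's: lift $\varphi_n$ to the universal torsors, realize its fibers as orbits of a $\Ga$-action on $\Spec\Cox(X_n)$, invoke insensitivity of Cox rings to codimension-two modifications, and detect non-linearizability from the reducibility of the fixed locus; your fixed-locus argument is exactly the one in the paper. However, the two steps you defer as ``careful bookkeeping'' are precisely where the paper's proof carries its weight, and as written they are genuine gaps. First, ``the structural Pic-torus on both sides is the same, so the extra $\mathbb{A}^1$-fiber direction acquires an honest $\Ga$-action'' is not an argument: the fibration is a Zariski-locally trivial $\mathbb{A}^1$-bundle that is \emph{not} a line bundle, so its fibers carry no canonical group structure, and the action must be exhibited. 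The paper does this by noting that the $T_{NS}$-eigenspace of $\Cox(X_n)$ for the hyperplane class $H$ has basis the monomials $m_j = y_j\prod_{I\not\ni j}x_I$, so that projection from $q_n$ becomes uniform translation $m_j\mapsto m_j+\lambda$, and then choosing the local section over the complement of the exceptional divisors (all $x_I\neq 0$) and rescaling by $\prod_I x_I$, which yields $y_j\mapsto y_j+\lambda\prod_{I\ni j}x_I$ on the nose; this is exactly how the normalization you worry about gets pinned down.

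Second, even granting the action, one must show (i) that off its fixed locus it is a $\Ga$-torsor, and (ii) that the resulting quotient is, in codimension one, the universal torsor of $\M_{0,n}$ and not merely some $T_{NS}$-torsor over $\M_{0,n}$; only then does ``passage to global functions'' give $\Cox(\M_{0,n})=\Cox(X_n)^{\Ga}$. For (i) the paper separates orbits off the fixed locus by the explicit boundary binomial invariants of Proposition \ref{prop:boundary} and upgrades the resulting geometric quotient to a torsor via the $\SL_2$-sweep and descent; for (ii) it uses that the total space (affine space minus a codimension-two set) has trivial Picard group by $\mathbb{A}^1$-homotopy invariance, together with the compatibility of taking the $\Ga$- and $T_{NS}$-quotients in either order, to identify the base with $\M_{0,n}$ via $\varphi_n$. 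Your proposal contains no substitute for either step. A minor further caveat: your $\exp(\lambda N)$ description of linear $\Ga$-actions is characteristic-zero language, while the result is asserted over any field; the relevant fact---that the fixed locus of a linearizable action is a linear subspace, hence irreducible---still holds, so your obstruction itself is sound.
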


\begin{proof}
We will show that, away from a codimension two locus, the universal torsor of $X_n$ is a $\Ga$-torsor over the universal torsor of $\M_{0,n}$.  Since Cox rings are insensitive to codimension two modifications, this suffices.  We build this $\Ga$-torsor in essence by lifting the rational map $\varphi_n : X_n \dashrightarrow \M_{0,n}$ to the universal torsors by choosing a local section.  Any dense open domain of this local section suffices, so we will use the complement of the exceptional divisors, which reduces everything simply to linear projection from a point.  We now begin this argument in earnest.

The polynomial ring $\Cox(X_n)$ admits a natural action by the Neron-Severi torus \[T_{NS} = \Hom(\Pic(X_n),\mathbb{G}_m) = \Hom(\Pic(\M_{0,n}),\mathbb{G}_m)\] and it is a straightforward computation in toric geometry that the character corresponding to the hyperplane class \[H \in \Pic(X_n) = \chi(T_{NS})\] has an eigenbasis given by the monomials \[m_j := y_j\prod_{I\not\ni j} x_I,~j=1,\ldots,n-1.\]  Indeed, this follows from the fact that the divisor $\{y_j = 0\}$ corresponds to the strict transform of the coordinate hyperplane in $\mathbb{P}^{n-2}$ not containing the coordinate point $q_j$.  Thus the projective space $\mathbb{P}^{n-2}$ blown-up to construct $X_n$ naturally has homogeneous coordinates $(m_1 : \cdots : m_{n-1})$.

The rational map $\mathbb{P}^{n-2} \dashrightarrow \mathbb{P}^{n-3}$ induced by $\varphi_n$ is linear projection from the point $q_n \in \mathbb{P}^{n-2}$.  The fibers of this are affine lines, which when expressed in homogeneous coordinates are of the form $(m_1+\lambda,\ldots,m_{n-1}+\lambda)$ for $\lambda\in \Ga$.  In other words, the uniform translation action of $\mathbb{G}_a$ on $\mathbb{A}^{n-1}$ admits a quotient map $\mathbb{A}^{n-1} \rightarrow \mathbb{A}^{n-2}$ that when restricted to the universal torsor (i.e., quasi-affine cone) of $\mathbb{P}^{n-2}$ descends to our rational map $\mathbb{P}^{n-2} \dashrightarrow \mathbb{P}^{n-3}$.  Over the locus in $\mathbb{P}^{n-2}$ consisting of the complement of the coordinate linear subspaces being blown-up, all the $x_I$ coordinates in $\Cox(X_n)$ are nonzero, so we can use homogeneous coordinates \[(m_1\prod_I x_I^{-1} : \cdots : m_{n-1} \prod_I x_I^{-1}).\] to obtain sections, over this generic locus, of the universal torsor of $X_n$.  Now \[m_j \prod_I x_I^{-1} = y_j \prod_{I \ni j} x_I^{-1},\] so the fibers of $\varphi_n : X_n \dashrightarrow \M_{0,n}$ generically lift to orbits under the $\mathbb{G}_a$-action on $\mathbb{A}^{N} := \Spec\Cox(X_n)$ given by $x_I \mapsto x_I$, for all $I$, and \[y_j\prod_{I\ni j} x_I^{-1} \mapsto y_j\prod_{I\ni j} x_I^{-1} + \lambda,\] or equivalently, \[y_j \mapsto y_j + \lambda\prod_{I\ni j}x_I.\]  

We claim this $\mathbb{G}_a$-action provides the structure of a torsor away from its fixed-point locus, which manifestly has codimension at least two.  To avoid detailed discussions of non-reductive stability and semi-stability (see \cite{DK07}), we give a quick self-contained argument that uses just one collection of easy, explicit invariants.  The torsor, i.e., principal bundle, structure follows if the $\mathbb{G}_a$-action off the fixed-point locus yields a geometric quotient, by \cite[Proposition 0.8]{GIT} applied to the $\SL_2$-sweep (sometimes called ``homogeneous fiber space,'' cf. \cite[\S4.8]{PV94} and \cite[\S5]{DK07}) of this locus: the $\SL_2$-quotient map is affine because reductive GIT quotients are always affine morphisms, which yields an $\SL_2$-torsor since the action is free and proper, and then by \'{e}tale descent the additive action is a $\mathbb{G}_a$-torsor.  To show the $\mathbb{G}_a$-action off the fixed-point locus is a geometric quotient, it suffices to show the $\mathbb{G}_a$-orbits are separated by invariants.  Proposition \ref{prop:boundary} below explicitly describes the invariants associated to the boundary divisors, and it is easily checked that these already suffice to separate orbits off the fixed point locus.  

We now claim that (away from a codimension two locus) this $\Ga$-torsor i) inherits the structure of a $\Ga \rtimes T_{NS}$-torsor, ii) has total space admitting no non-trivial line bundles, and iii) has $\M_{0,n}$ as its base.  These claims suffice to finish the proof that $\Cox(\M_{0,n}) = \Cox(X_n)^{\Ga}$, since the Cox ring is the coordinate ring of the universal torsor (and is insensitive to codimension two modifications).  Item i) follows from the fact that the $\Ga$-action on $\Cox(X_n)$ is normalized by the $T_{NS}$-action, item ii) from the fact that $\Spec\Cox(X_n)$ is affine space and the Picard group is an $\mathbb{A}^1$-homotopy invariant, and iii) from the fact that first quotienting by the $\Ga$-action and then by the $T_{NS}$-action is the same as first quotienting by the $T_{NS}$-action, thus yielding $X_n$, and then applying the rational map $\varphi_n$, thus yielding $\M_{0,n}$.

This $\Ga$-action on affine space is non-linearizable for $n \ge 6$ because the fixed point locus for a linear $\mathbb{G}_a$-action is irreducible, corresponding to certain coordinates vanishing.  The fixed point set for our $\mathbb{G}_a$-action clearly has multiple irreducible components, when $n \ge 6$. But the number of irreducible components of the fixed point set is preserved under equivariant isomorphism of affine space.
\end{proof}

\begin{corollary}
The fibers of $\varphi_n : X_n \dashrightarrow \M_{0,n}$ form a Zariski-locally trivial $\mathbb{A}^1$-fibration on $\Image\varphi_n$ that is not an algebraic line bundle.
\end{corollary}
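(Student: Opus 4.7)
This is immediate from the construction in the proof of Theorem \ref{thm:Cox}, which realizes $\varphi_n$ on $\Image\varphi_n$ (up to codimension two) as the $T_{NS}$-quotient of the $\Ga$-torsor $\mathcal{T}(X_n)\to\mathcal{T}(\M_{0,n})$ at the universal-torsor level. Any $\Ga$-torsor is Zariski-locally trivial, since $H^1(U,\mathcal{O}_U)=0$ for affine $U$ and so $\Ga$-torsors over affine schemes admit sections and are trivial. These local trivializations descend through the $T_{NS}$-quotient to trivializations of $\varphi_n|_{\Image\varphi_n}$. Concretely, on the open subset where $\prod_{I\ni j}x_I$ is non-vanishing (for a fixed $j$), the rational function $y_j/\prod_{I\ni j}x_I$ provides an explicit affine fiber coordinate.

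\textbf{Not an algebraic line bundle.} Suppose for contradiction that $\varphi_n|_{\Image\varphi_n}$ were a line bundle $\mathcal{L}$. The pullback of $\mathcal{L}$ to the universal torsor $\mathcal{T}(\M_{0,n})$ is trivial, since $\Pic(\mathcal{T}(\M_{0,n}))=0$; so up to the codimension-two modifications of Theorem \ref{thm:Cox}, the $\Ga$-torsor $\mathcal{T}(X_n)\to\mathcal{T}(\M_{0,n})$ identifies with the trivial projection $\mathcal{T}(\M_{0,n})\times\mathbb{A}^1\to\mathcal{T}(\M_{0,n})$, with $\Ga$ acting by translation on the $\mathbb{A}^1$-factor. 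Triviality of this $\Ga$-torsor means that there exists a polynomial $f\in\Cox(X_n)$ satisfying $Df=1$, where $D=\sum_j\prod_{I\ni j}x_I\,\partial_{y_j}$ generates the $\Ga$-action. But each term of $D$ strictly raises the degree in the $x_I$ variables, so $Df$ lies in the ideal generated by the $x_I$'s for every polynomial $f$, and in particular can never equal the non-zero constant $1$. Contradiction.

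\textbf{Main obstacle.} The subtlest point is the pullback-trivialization step: one must argue that, once lifted to the universal-torsor level, the hypothetical line bundle structure forces the $\Ga$-torsor to be literally trivial (not merely trivial up to a twist). This uses the $T_{NS}$-equivariance of the $\Ga$-torsor structure together with $\Pic(\mathcal{T}(\M_{0,n}))=0$, and care must be taken with the codimension-two excisions on both sides. The subsequent computation with $D$ is then a straightforward monomial count.
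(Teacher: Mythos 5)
Your overall route parallels the paper's: lift to the universal torsors, get Zariski-local triviality from torsors under special groups, and rule out a line bundle structure using triviality of the Picard group of the universal torsor together with insensitivity to codimension-two modifications. The one step that does not work as written is the descent in your first part: local trivializations of the $\Ga$-torsor over open subsets of the universal torsor do \emph{not} automatically ``descend through the $T_{NS}$-quotient,'' since neither the opens nor the trivializations need be $T_{NS}$-equivariant; relatedly, your proposed fiber coordinate $y_j/\prod_{I\ni j}x_I$ has nonzero $T_{NS}$-weight (its class is $H-\sum_I E_I$), so it is a fiber coordinate for the torsor upstairs but not a function on any open subset of $\Image\varphi_n$. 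The paper sidesteps this by invoking that the full solvable group $\Ga\rtimes\Gm^{\rho}$ is special, so the combined torsor over (a codimension-two complement of) $\Image\varphi_n$ is Zariski-locally trivial, and then quotienting by the torus factor, with transition functions landing in automorphisms of the affine line. You can repair your version in the same spirit by first trivializing the $T_{NS}$-torsor locally on the base (tori are also special), pulling the $\Ga$-torsor back along the resulting local sections, and trivializing there.

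Your second part is correct and in fact more complete than the paper's. The paper, after trivializing the hypothetical line bundle on a codimension-two complement of the universal torsor via $\Pic=0$ and smoothness, merely asserts that the $\Ga$-torsor is non-trivial ``by the Theorem''; you actually prove the non-triviality: a trivialization yields a regular function $f$ with $Df=1$ on a codimension-two complement of $\Spec\Cox(X_n)$, hence by normality a polynomial $f\in\Cox(X_n)$ with $Df=1$, which is impossible since $Df$ lies in the ideal generated by the $x_I$. Two small polish points: state the Hartogs-type extension of $f$ across the codimension-two locus explicitly, and note that you do not need the identification with a product carrying the literal translation action---any line-bundle structure supplies a zero section, and any section of the underlying fibration of a $\Ga$-torsor already trivializes the torsor, after which the derivation computation finishes the argument.
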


\begin{proof}
That this is a Zariski locally trivial $\mathbb{A}^1$-bundle follows at heart from the fact that $\Ga$ is a special group in the sense of Serre, so that any $\Ga$-torsor is Zariski locally trivial.  Similarly $\Ga \rtimes \mathbb{G}_m^{\rho}$ is a special group.  Thus this solvable group torsor is Zariski locally trivial.  Pick a trivialization.  Upon quotienting by the torus factor of the solvable group, on each trivialization the fiber is an affine line.  The transition functions are necessarily valued in automorphisms of the affine line, giving a Zariski locally trivial $\mathbb{A}^1$-bundle on $\Image\varphi_n$.  If this were an algebraic line bundle, then it would pull-back to an algebraic line bundle on a codimension 2 complement of the universal torsor of $\M_{0,n}$.  But all line bundles on the universal torsor are trivial; and since $\M_{0,n}$ is non-singular, so is this $T_{NS}$ torsor, hence any line bundle on a codimension 2 complement would canonically extend to a line bundle over the full torsor, and thus there is only the trivial line bundle, up to isomorphism, on the codimension 2 complement as well.  Therefore if the $\mathbb{A}^1$-fibration were an algebraic line bundle it would have to be trivial.  But its pull-back to the $T_{NS}$-torsor is, by the Theorem, a non-trivial $\Ga$-torsor.  Thus the $\mathbb{A}^1$-bundle is not an algebraic line bundle.  
\end{proof}

The remaining statements in Corollary \ref{intro:M0nCox} are immediate.  Indeed, that this $\Ga$-action corresponds to an elementary monomial derivation is by definition, and the claim about algorithmics, as mentioned in the introduction, hold for any $\Ga$-invariant subring---so all that remains is item (2), the denominator-clearing interpretation of the Cox ring.  This follows from the fact that taking invariants commutes with localization (when inverting invariant functions, such as the $x_I$) and the observation that when all $x_I$ are inverted the induced action is uniform translation $\frac{y_i}{z_i} \mapsto \frac{y_i}{z_i} + \lambda$, so the invariants are indeed given by these differences.

In fact, the $\M_{0,n}$ case of Theorem \ref{intro:quotient} also follows from the above Cox ring result.  Indeed, one could approach this by noting that any projective variety $X$ is the GIT quotient of $\Spec\Cox(X)$ by its Neron-Severi torus for an ample character, and viewing a succession of GIT quotients (additive then torus) as a single GIT quotient for the full group, a perspective compatible with \cite{DK07} and further developed in \cite{DHK14}; when the (semi)invariants for the full group are finitely generated one simply takes Proj.  To stay self-contained, let us recall that standard GIT identifies a homogeneous coordinate ring of the projective quotient variety (which depends on a choice of very ample line bundle) with the subring of (semi)invariant sections of (powers of) a line bundle on the original variety.  With this as the definition of a non-reductive GIT quotient variety, i.e., that for a suitable very ample line bundle $L$ on the projective quotient, there is a line bundle on the pre-quotient and a choice of character such that the graded ring of semi-invariants for multiples of that character is isomorphic to the graded ring generated by sections of $L$, we see:

\begin{corollary}\label{cor:affinequot}
The variety $\M_{0,n}$ is a non-reductive GIT quotient of affine space by an action of the solvable group $\Ga\rtimes \Gm^{\rho}$, where $\rho$ is the Picard number.  
\end{corollary}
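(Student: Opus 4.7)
The plan is to combine Theorem \ref{thm:Cox} with the standard Proj description of a polarized projective variety in terms of its Cox ring, and then verify that this matches the definition of non-reductive GIT quotient stated in the paragraph preceding the corollary.

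First, I would identify the group and its character lattice. The polynomial ring $\Cox(X_n)$ carries the $T_{NS}$-action with $T_{NS} = \Hom(\Pic(X_n),\Gm)\cong\Gm^\rho$, and the $\Ga$-action of Theorem \ref{thm:Cox} is normalized by $T_{NS}$ because the monomial $\prod_{I\ni j}x_I$ has the same $T_{NS}$-weight as $y_j$. This gives an honest semidirect product $\Ga\rtimes\Gm^\rho$ acting on $\mathbb{A}^N:=\Spec\Cox(X_n)$. Since $\Ga$ has no nontrivial characters, every character of this semidirect product factors through $T_{NS}$, so the character lattice is $\Pic(X_n)=\Pic(\M_{0,n})$.

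Next, I would pick a very ample line bundle $L$ on $\M_{0,n}$, corresponding to a character $\chi$ of $\Ga\rtimes\Gm^\rho$, and take as the pre-quotient line bundle the trivial bundle on $\mathbb{A}^N$ twisted by $\chi$. Because the $\Ga$-action preserves the $T_{NS}$-grading, taking $\Ga$-invariants commutes with the $T_{NS}$-weight decomposition, and so the $\chi^k$-semi-invariant sections on the pre-quotient are
\[
\bigl(\Cox(X_n)_{\chi^k}\bigr)^{\Ga}=\bigl(\Cox(X_n)^{\Ga}\bigr)_{\chi^k}=\Cox(\M_{0,n})_{\chi^k}=\HH^0(\M_{0,n},L^{\otimes k}),
\]
where the middle equality is Theorem \ref{thm:Cox} and the last is the defining property of the Cox ring. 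Summing over $k\ge 0$, the graded ring of semi-invariants for multiples of $\chi$ coincides with $\bigoplus_k \HH^0(\M_{0,n},L^{\otimes k})$; applying $\Proj$ recovers $\M_{0,n}$ since $L$ is very ample, which matches the stated definition of non-reductive GIT quotient exactly.

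The only real subtlety is bookkeeping for the semidirect product: that the character group of $\Ga\rtimes\Gm^\rho$ equals $\chi(T_{NS})$ (using the absence of characters on $\Ga$), and that normality of the $\Ga$-action under $T_{NS}$ makes the two operations of taking $\Ga$-invariants and decomposing by $T_{NS}$-weight commute. Once these are in hand the corollary is immediate from Theorem \ref{thm:Cox}; no new geometric input is required, and in particular no finite generation or stability analysis of the $\Ga\rtimes\Gm^\rho$-action is needed, since the definition of non-reductive GIT quotient supplied in the excerpt is stated in terms of the graded ring of semi-invariants rather than a Mumford-style stable locus.
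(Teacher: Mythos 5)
Your overall route is in fact the paper's: fix a very ample $L$, pass to its character $\chi_L$ of $T_{NS}$, and use Theorem \ref{thm:Cox} to identify the semi-invariants of the solvable group for multiples of $\chi_L$ inside $\Cox(X_n)$ with graded pieces of $\Cox(\M_{0,n})$. However, two steps as written are incorrect. First, the monomial $\prod_{I\ni j}x_I$ does \emph{not} have the same $T_{NS}$-weight as $y_j$: one has $\deg y_j = H-\sum_{I\not\ni j}E_I$ while $\deg\prod_{I\ni j}x_I=\sum_{I\ni j}E_I$, and these differ by $H-\sum_I E_I\neq 0$. What makes the construction work is that this difference is independent of $j$, so conjugation by $T_{NS}$ rescales the $\Ga$-parameter by the character $H-\sum_I E_I$; this is exactly why one gets a genuinely nontrivial semidirect product (if your weight claim were true the two actions would commute and the $\Ga$-action would descend to $X_n$ itself). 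In particular the $\Ga$-action does not preserve the $T_{NS}$-grading; rather, the associated locally nilpotent derivation $\partial$ is homogeneous of degree $\sum_I E_I-H$, and it is this homogeneity that makes $\Cox(X_n)^{\Ga}=\ker\partial$ a graded subring and justifies $\bigl(\Cox(X_n)_{\chi^k}\bigr)^{\Ga}=\bigl(\Cox(X_n)^{\Ga}\bigr)_{\chi^k}$. So your displayed equality is true, but the reason you give for it is false and must be replaced.

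Second, the definition stated before the corollary asks that the graded ring of semi-invariants be isomorphic to the graded ring \emph{generated by} sections of $L$, that is, generated in degree one by $H^0(\M_{0,n},L)$. Your computation identifies the semi-invariant ring with the full section ring $\bigoplus_{k\ge 0}H^0(\M_{0,n},L^{\otimes k})$, which for an arbitrary very ample $L$ can strictly contain the subring generated by $H^0(L)$; ``applying $\Proj$ recovers $\M_{0,n}$'' is not the criterion being verified. This is precisely why the paper replaces the chosen very ample bundle by a suitable tensor power with surjective multiplication maps $H^0(L^{p})\otimes H^0(L^{q})\to H^0(L^{p+q})$, so that every semi-invariant of weight $m\chi_L$ is a polynomial in elements of the weight-$\chi_L$ piece and hence lies in the image of the ring generated by $H^0(L)$. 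Your closing assertion that no such adjustment is needed is therefore too quick. Both gaps are fixable in a line or two (record the correct weights and homogeneity of $\partial$; choose $L$ projectively normal), after which your argument coincides with the paper's proof.
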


\begin{proof}
Pick a very ample line bundle on $\M_{0,n}$.  Upon taking a suitable tensor power, call it $L$, this may be taken without loss of generality to be projectively normal such that multiplication maps $H^0(\M_{0,n}, L^p) \otimes H^0(\M_{0,n}, L^q) \longrightarrow H^0(\M_{0,n}, L^{pq})$ are surjective.  
Consider the canonically associated character $\chi_L$ for $T_{NS}$, which determines, by the theorem, a multi-graded piece of not only $\Cox(\M_{0,n})$ but also of the polynomial ring $\Cox(X^{\prime})$.    

The space of sections $H^0(\M_{0,n},L)$ is identified with the $\chi_L$-multi-graded subspace of $\Cox(\M_{0,n})$ by definition, and then with the $\Ga$-invariant elements in $\Cox(X^{\prime})$ by the Theorem.  The ring $R$ generated by taking symmetric powers of the space of  sections of $H^0(\M_{0,n}, L)$ therefore admits a canonical evaluation homomorphism to a subring of the polynomial ring $\Cox(X^{\prime})$ whose elements are solvable group semi-invariants for some $m \chi_L$, where $m$ is a positive integer, by the Theorem.  We simply want to show that conversely any semi-invariant in the polynomial ring $\Cox(X^{\prime})$ for a character $m \chi_L$ lies in the image of $R$ under the evaluation homomorphism.  But this follows immediately from our assumptions on $L$:  any such semi-invariant is automatically an element of $\Cox(\M_{0,n})$ by the theorem, and by surjectivity of multiplication maps it must be polynomial in the elements of the $\chi_L$-multi-graded piece of $\Cox(\M_{0,n})$ and hence is identified as the image of an element of $R$.  
\end{proof}

\begin{remark}
Observe that, from an invariant theory perspective, these solvable group semi-invariants can be thought of as arising from a two stage process, in essence corresponding to a two-stage GIT quotient but with possible non-finite generation issues: first restrict to the subring of $\Ga$-invariants of $\Cox(X^{\prime})$, then restrict further to the subring of $\Ga$-invariants that are admissible semi-invariants for the torus action with chosen character.  Furthermore, using the definitions from \cite{DK07} and \cite{DHK14}, one shows that the non-reductive GIT quotient construction works as in the reductive case for any choice of line bundle and ample character on the pre-quotient, here affine space, so there is no need to specialize to appropriate tensor powers of $L$ as in the proof of the theorem.  However, we will not make use of this slightly stronger form of the result in this paper, as we are concerned with the quotient variety not the descended line bundle.  
\end{remark}

\begin{remark}
Another way of expressing this is that for a certain open subset of this affine space $\Spec\Cox(X_n)$, corresponding to the semistable points of the group action (and choice of ample character) under a suitable definition \cite{DK07,DHK14}, $\M_{0,n}$ is the categorical quotient in the category of varieties.  Note this has the feature that the quotient map need not be surjective, but rather may have as image a constructible set instead of a variety.  Of course, the categorical quotient still exists as a variety, containing the constructible set as a dense subset, by definition.
\end{remark}

\begin{remark}
Since the $\Ga$-action discussed in this section is defined over $\Spec\mathbb{Z}$, as is the isomorphism $\M_{0,n} \cong \Bl\PP^{n-3}$, it is easy to see that all proofs and results in this section are valid over $\Spec\mathbb{Z}$.
\end{remark}


\subsection{Boundary divisors and invariants}\label{sec:boundary}

To illustrate the identification, provided by Theorem \ref{thm:Cox}, of $\Cox(\M_{0,n})$ with a subring of a polynomial ring, in this section we identify the polynomials corresponding to sections of the boundary divisors.  In particular, this gives a new proof (more importantly, conceptual reason) why the boundary divisors generate the effective cone for $n = 5$.

\begin{proposition}\label{prop:boundary}
The elements of $\Cox(\M_{0,n}) =\Cox(X_n)^{\mathbb{G}_a}$ corresponding to sections of the boundary divisors are exactly the variables $x_I$ and the binomials \[y_i \prod_{I\ni j, I\not\ni i}x_I - y_j \prod_{I\ni i, I\not\ni j}x_I\] for $i, j \in [n-1]$ with $i\ne j$.
\end{proposition}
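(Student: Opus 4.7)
The plan is to verify $\mathbb{G}_a$-invariance by direct computation and then identify the $\Pic(X_n)=\Pic(\M_{0,n})$-degree of each listed element with the class of a boundary divisor. Setting $z_k := \prod_{K\ni k} x_K$ so that the action is $y_k \mapsto y_k + \lambda z_k$ and $x_I\mapsto x_I$, the variables $x_I$ are manifestly invariant, and the identity $(y_i+\lambda z_i)z_j - (y_j+\lambda z_j)z_i = y_i z_j - y_j z_i$ combined with factoring out the common monomial $\prod_{K\supseteq\{i,j\}} x_K$ yields the invariance of each binomial.

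Next I would compute degrees. By toric geometry, $y_k$ is the defining section of the strict transform of the coordinate hyperplane $\{Y_k=0\}\subset \mathbb{P}^{n-2}$, whose class is $H - \sum_{K\not\ni k,\,1\le|K|\le n-4} E_K$, while $x_K$ has class $E_K$. A routine cancellation shows the binomial for $\{i,j\}$ has degree
\[ H - \sum_{K\subseteq [n-1]\setminus\{i,j\},\; 1\le|K|\le n-4} E_K. \]
Under the identification $\Pic(X_n) = \Pic(\M_{0,n})$ induced by pullback along $\varphi_n$, this is precisely the class of the strict transform in $\M_{0,n}$ of the hyperplane $\langle q_k\rangle_{k\in I}\subset\mathbb{P}^{n-3}$ with $I = [n-1]\setminus\{i,j\}$, since that hyperplane contains $\langle q_k\rangle_{k\in J}$ exactly when $J\subseteq I$. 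This is the boundary divisor of $\M_{0,n}$ indexed by $I$ of size $n-3$; similarly $x_I$ has the class of the exceptional boundary divisor $E_I$ for $1\le|I|\le n-4$.

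To promote agreement of classes to agreement of divisors-of-zeros, I would use the factorization
\[ m_i - m_j = \prod_{K\subseteq [n-1]\setminus\{i,j\}} x_K \cdot \Bigl(y_i \prod_{K\ni j,\,K\not\ni i} x_K - y_j \prod_{K\ni i,\,K\not\ni j} x_K\Bigr), \]
where $m_k = y_k\prod_{K\not\ni k} x_K$ are the Cox-theoretic homogeneous coordinates on $\mathbb{P}^{n-2}$ identified in the proof of Theorem \ref{thm:Cox}. The linear form $m_i-m_j$ pulls back the hyperplane $\{Y_i=Y_j\}\subset \mathbb{P}^{n-2}$, which passes through $q_n$ and through $q_k$ for $k\notin\{i,j\}$, and under linear projection from $q_n$ this image is exactly $\langle q_k\rangle_{k\in I}\subset\mathbb{P}^{n-3}$. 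The extracted monomial factor accounts for the exceptional-divisor components of the total transform, so what remains is precisely the section cutting out the strict transform, i.e., the boundary divisor. Since boundary divisors of $\M_{0,n}$ are enumerated by $I\subseteq [n-1]$ with $1\le|I|\le n-3$, this account is exhaustive.

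The main obstacle is purely bookkeeping: one must consistently track exceptional-divisor multiplicities on $X_n$ (a blow-up of $\mathbb{P}^{n-2}$) against those on $\M_{0,n}$ (a blow-up of $\mathbb{P}^{n-3}$) and check that the identification of Picard groups induced by $\varphi_n$ matches $H\leftrightarrow H$ and $E_K\leftrightarrow E_K$. No new ideas are required beyond the combinatorial structure of the Cox ring of the toric variety $X_n$ and the factorization of $m_i-m_j$ displayed above.
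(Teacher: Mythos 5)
Your proposal is correct, but it takes a genuinely different route from the paper's. The paper never identifies the zero divisor of the binomial: it computes the class $H-\sum_{K\subseteq[n-1]\setminus\{i,j\}}E_K$ of the non-exceptional boundary divisor, observes that the corresponding character of $T_{NS}$ has a two-dimensional eigenspace in $\Cox(X_n)$ spanned exactly by the two monomials $y_i\prod_{K\ni j,K\not\ni i}x_K$ and $y_j\prod_{K\ni i,K\not\ni j}x_K$, and notes that the $\Ga$-invariants in this eigenspace form the line spanned by their difference; since the section cutting out the boundary divisor is a nonzero element of this graded piece of $\Cox(\M_{0,n})=\Cox(X_n)^{\Ga}$, it must be that binomial up to scalar. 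This pure dimension count also gives, as a byproduct, that the boundary class has a one-dimensional space of sections, and it sidesteps all divisor-level geometry of $\varphi_n$. You instead verify invariance directly and pin down the actual divisor of zeros via the factorization $m_i-m_j=\prod_{K\cap\{i,j\}=\varnothing}x_K\cdot(\text{binomial})$ together with the observation that $\{Y_i=Y_j\}$ is the cone with vertex $q_n$ over $\langle q_k\rangle_{k\in I}$, $I=[n-1]\setminus\{i,j\}$; this is more explicit and explains geometrically where the binomial comes from, but it is what forces the strict-versus-total transform bookkeeping and the check that the identification $\Pic(X_n)=\Pic(\M_{0,n})$ matches $H$ with $H$ and $E_K$ with $E_K$ --- steps the paper's eigenspace argument never needs. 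Both routes rest on Theorem \ref{thm:Cox} and on the description in \S\ref{section:KapBlow} of the non-exceptional boundary divisors as strict transforms of the hyperplanes $\langle q_k\rangle_{k\in I}$ with $|I|=n-3$.
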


\begin{proof}
Since the rational map $\varphi_n$ sends the exceptional divisors of $X_n$ to the corresponding exceptional divisors of $\M_{0,n}$, it is immediate that $x_I$ is the unique (up to scalar) section of $E_I$.  The other boundary divisors are given by the strict transform of the hyperplanes spanned by $\langle q_i\rangle_{i\in I}$ for $I\subseteq [n-1]$ satisfying $|I|=n-3$, as explained in \S\ref{section:KapBlow}.  Each such hyperplane is the hyperplane passing through all but two of the distinguished points in $\mathbb{P}^{n-3}$, say $q_i$ and $q_j$.  The class of the strict transform is then \[H - \sum_{I \subseteq [n-1]\setminus \{i,j\}}E_I.\]  This class yields a character of $T_{NS}$ with two-dimensional eigenspace in $\Cox(X_n)$, spanned by the two monomials appearing in the proposition statement.  Clearly there is a one-dimensional subspace of $\mathbb{G}_a$-invariant polynomials and it is spanned by the binomial that is the difference of these monomials.
\end{proof}

It is well-known and easy to verify that the effective cone of $\M_{0,5}$ is spanned by boundary divisor classes (see, e.g., \cite[\S5.3]{CFM12}).  Our description of the Cox ring yields another perspective on this fact.  Indeed, we can always describe the $\mathbb{G}_a$-invariant ring in Theorem \ref{thm:Cox} as an $\SL_2$-invariant ring by taking the sweep $\SL_2\times_{\mathbb{G}_a}\Spec\Cox(X_n)$ and considering its natural $\SL_2$-action; the key point is that for $n=5$ the $\mathbb{G}_a$-action is linear so this sweep is an open subset (with codimension two complement) of $\mathbb{A}^{10}$, since $\SL_2/\mathbb{G}_a \cong \mathbb{A}^2\setminus\{0\}$ and $\Spec\Cox(X_5) \cong \mathbb{A}^8$, and the $\SL_2$ action is given by a direct sum of the defining representation.  The invariants are therefore the $2\times 2$ minors of the $2\times 5$ coordinate matrix, and when restricting back to the original $\mathbb{A}^8$ these are $x_1,\ldots,x_4$ and the six binomials $y_ix_j - y_jx_i$ appearing in Proposition \ref{prop:boundary}---so they are exactly the boundary divisor invariants.  A discussion of generators beyond these boundary invariants, for $n > 5$, appears in \cite{DGJ14}.


\section{Arbitrary configurations of linear subspaces}

Here we discuss the natural generalization of the preceding section to the case of an arbitrary collection of linear subspaces, say $L_1,\ldots,L_s\subset \PP^r$, and thereby prove Theorems \ref{intro:quotient} and \ref{intro:Cox}.  Choose a basis $p^i_1,\ldots,p^i_{\dim L_i+1}$ of each $L_i$, and let \[n := \sum_{i=1}^s (\dim L_i+1).\]

If $n \le r+1$, then a linear change of coordinates transforms the $L_i$ to coordinate linear subspaces and we see that the iterated blow-up $X = \Bl\PP^r$ along the $L_i$ is a toric variety and there is nothing to prove.  So assume $n > r+1$, and set $t := n - r - 1$.  Let $A\in M_{(r+1)\times n}$ be a matrix with columns given by homogeneous coordinates of the $p_j^i$.  Then $A$ induces a linear projection $\varphi_A : \PP^{r+t} \dashrightarrow \PP^r$ with indeterminacy locus $K := \PP\ker A$ sending the $n$ coordinate points to the $n$ points $p_j^i$.  For each $1 \le i \le s$, let $L_i' \subseteq \PP^{r+t}$ be the linear subspace spanned by the coordinate points mapping to $p^i_1,\ldots,p^i_{\dim L_i+1}$.  Clearly $\dim L'_i = \dim L_i$.  We claim that $L'_i \cap K = \varnothing$, and hence $\varphi_A(L'_i) = L_i$.  Indeed, this follows immediately from the fact that the points $p^i_1,\ldots,p^i_{\dim L_i+1}$ were chosen to be linearly independent, since $L'_i$ meets $K$ if and only if the matrix formed by the columns of $A$ corresponding to these points has non-trivial kernel.

Consider the $\Ga^{n}$-action on $\mathbb{A}^{n}$ defined by $y_i \mapsto y_i + \lambda_i$, where $\lambda_i$ is a parameter for the $i^{\text{th}}$ factor, and let $G := \ker A \subseteq \Ga^n$.  Then the above linear projection $\varphi_A$ is induced by the quotient $\mathbb{A}^n \rightarrow \mathbb{A}^n/G$ and it extends to a rational map \[X' = \Bl\PP^{r+t} \dashrightarrow \Bl\PP^r = X\] from the iterated blow-up along the $L'_i$.  Note that $X'$ is a toric variety satisfying $\Pic(X') = \Pic(X)$.  The proofs of Theorems \ref{intro:quotient} and \ref{intro:Cox} now proceed exactly as in \S\ref{sec:lift}.  Namely, by choosing a local section of the universal torsor of $X'$, on the complement of the $L'_i$, we can lift the fibers of our rational map to $G$-orbits for a canonically induced $G$-action on this open subset of the universal torsor.  We claim that this $G$-action canonically extends to the affine space $\Spec\Cox(X')$.  Indeed, the universal torsor has codimension two complement in this affine envelope, so it suffices to extend the dual action to $\Cox(X')$; but this is automatic since $\varphi_A$ sends $L'_i$ to $L_i$ so the dual action is trivial on the sections of the exceptional divisors and hence is completely determined by its action on the localization inverting all these sections.

Next, we claim that, as before, this $G$-action on the affine space $\Spec\Cox(X')$ induces the structure of a torsor away from a codimension two locus.  The argument is essentially verbatim, with the only question being what the substitute for the boundary divisor invariants should be.  The argument requires merely that these form a separating set of invariants on a codimension 2 complement of affine space.   We claim the following invariants suffice: take the linear invariants for the $G$-action on the localization given by inverting the sections of all exceptional divisors, then clear denominators.   It is straightforward to check that these invariants separate orbits on the locus where at most one exceptional divisor section is allowed to vanish, so in particular on a codimension 2 complement.  Ignoring codimension two loci, the quotient of this additive torsor is a $T_{NS}$-torsor over $X$, since the additive action is normalized by the defining $T_{NS}$-action on the universal torsor, and it has trivial Picard group by $\mathbb{A}^1$-homotopy invariance.  We conclude that this additive quotient is isomorphic in codimension one to the universal torsor of $X$, and hence that $\Cox(X) = \Cox(X')^G$.  The argument now that $X$ is a non-reductive GIT quotient of the affine space $\Spec\Cox(X')$ follows exactly as it did in the proof of Corollary \ref{cor:affinequot}.

Finally, we note that we also obtain an expression for $\Cox(X)$ as the intersection of two finitely generated rings, namely, the polynomial ring $\Cox(X')$ and the ring of linear invariants for the $G$-action on the complement of the exceptional divisor loci (see Equations (6) and (8) in \cite{Muk01} for this intersection in the case that all $L_i$ are points).


\end{document}